\newtheorem{theorem}{Theorem}
\newtheorem{prop}[theorem]{Proposition}
\newtheorem{lemma}[theorem]{Lemma}
\newtheorem{rmk}[theorem]{Remark}
\newcommand{\la}{\Delta}
\def\na{\nabla}
\def\th{\theta}
\renewcommand{\d}{\delta}
\newcommand{\ra}{\rightarrow}
\newcommand{\p}{\partial}
\newcommand{\f}{\frac}
\def\a{\alpha}
\def\lam{\lambda}
\def\e{\epsilon}
\def\v{\varphi}
\def\S{\Sigma}
\def\no{\nonumber\\}
\def\wan{\widetilde}
\newcommand{\be}{\begin{equation}}
\renewcommand{\ra}{\rightarrow}
\newcommand{\ee}{\end{equation}}
\newcommand{\bea}{\begin{eqnarray}}
\newcommand{\eea}{\end{eqnarray}}
\newcommand{\bna}{\begin{eqnarray*}}
\newcommand{\ena}{\end{eqnarray*}}
\renewcommand{\O}{\Omega}
\renewcommand{\le}{\left}
\newcommand{\ri}{\right}
\newcommand\weakto{\rightharpoonup}
\journal{***}
\begin{document}

\begin{frontmatter}

\title{ A generalized Trudinger-Moser inequality on a compact Riemannian surface}

\author{Xiaobao Zhu}
\ead{zhuxiaobao@ruc.edu.cn}
\address{ Department of Mathematics,
Renmin University of China, Beijing 100872, P. R. China}

\begin{abstract}
Let $(\Sigma, g)$ be a compact Riemannian surface. Let $\psi$, $h$ be two smooth functions on $\Sigma$ with
$\int_\Sigma \psi dv_g \neq 0$ and
 $h\geq0$, $h\nequiv0$.
In this paper, using a method of blowup analysis, we prove that the functional
\begin{align}\label{functional_J}
 J^{\psi,h}(u)=\frac{1}{2}\int _{\Sigma}|\nabla_g u|^2dv_g + 8\pi\frac{1}{\int_\Sigma \psi dv_g}\int_\Sigma \psi udv_g-8\pi\log\int _{\Sigma}he^{u}dv_g
\end{align}
is bounded from below in $W^{1,2}(\Sigma,g)$.
Moreover, we obtain a sufficient condition under which $J^{\psi, h}$ attains its infimum in $W^{1,2}(\Sigma, g)$. These results generalize
the main results in \cite{DJLW97} and \cite{YZ2016}.
\end{abstract}

\begin{keyword}
Trudinger-Moser inequality, variational method, blowup analysis, Kazdan-Warner equation

\MSC[2010] 58J05
\end{keyword}

\end{frontmatter}

\titlecontents{section}[0mm]
                       {\vspace{.2\baselineskip}}
                       {\thecontentslabel~\hspace{.5em}}
                        {}
                        {\dotfill\contentspage[{\makebox[0pt][r]{\thecontentspage}}]}
\titlecontents{subsection}[3mm]
                       {\vspace{.2\baselineskip}}
                       {\thecontentslabel~\hspace{.5em}}
                        {}
                       {\dotfill\contentspage[{\makebox[0pt][r]{\thecontentspage}}]}

\setcounter{tocdepth}{2}

\section{Introduction and main results}

Let $(\Sigma, g)$ be a compact Riemannian surface. Let $\psi$, $h$ be two smooth functions on $\Sigma$. In the celebrated paper \cite{DJLW97}, Ding-Jost-Li-Wang studied the functional $J^{\psi,h}$ in $W^{1,2}(\Sigma,g)$ when $\psi\equiv1$ and $h>0$.
Using a method of blowup analysis, they obtained
a sufficient condition ((\ref{condition}) with $\psi\equiv1$)  under which $J^{1,h}$ attains its infimum in $W^{1,2}(\S,g)$.

In this paper, we shall generalize Ding-Jost-Li-Wang's work \cite{DJLW97}. Precisely, we prove the following:
\begin{theorem}\label{thm-m-t}
Let $(\S,g)$ be a compact Riemannian surface. Let $\psi$ be a smooth function on $\Sigma$ satisfying $\int_\Sigma \psi dv_g\neq0$. For any $u\in W^{1,2}(\S, g)$ we have
\begin{align*}
\int_\S e^{u} dv_g\leq C_\S  \exp\left\{\f{1}{16\pi}||\na_g u||_2^2+\wan{u}\right\},
\end{align*}
where $\wan{u}=\frac{1}{\int_\S \psi dv_g}\int_\S \psi u dv_g$ and $C_\S$ is a positive constant depending only on $(\S, g)$.
\end{theorem}
Let $G_y(x)$ be the Green function which satisfies
\begin{align}\label{green-function}
\begin{cases}
\la_g G_y(x)=8\pi\left(\frac{\psi(x)}{\int_\Sigma \psi dv_g}-\d_y(x)\right),~~x\in\S,\\
\int_\S \psi G_y dv_g=0.
\end{cases}
\end{align}
In a normal coordinate system around $y$,  $G_y(x)$ has the expression
\begin{align}\label{green-function-expression}
G_y(x)=&-4\log r+ A_y +b_1 r\cos\theta+b_2 r\sin\theta\nonumber\\
&+c_1r^2\cos^2\theta+2c_2r^2\cos\theta\sin\theta+c_3r^2\sin^2\theta+O(r^3),
\end{align}
where $r(x)=\mbox{dist}(x,y)$ is the distance function from $x$ to $y$ on $(\Sigma, g)$.\\

To prove Theorem \ref{thm-m-t}, we consider the perturbed functional
\begin{align}\label{functional_perturbed}
J^{\psi,h}_\epsilon(u)=\frac{1}{2}\int_\Sigma |\nabla_g u|^2 dv_g + 8\pi(1-\epsilon)\frac{1}{\int_\Sigma \psi dv_g}\int_\Sigma \psi u dv_g -8\pi(1-\epsilon)\log\int_\Sigma he^u dv_g
\end{align}
for $\epsilon\in(0,1)$. In view of the classical Trudinger-Moser inequality (c.f. Lemma \ref{lemma-m-t-ineq} below), $\forall \epsilon\in(0,1)$,
$\exists~ u_\epsilon\in W^{1,2}(\Sigma,g)$, such that $J^{\psi,h}_\epsilon(u_\epsilon)=\inf_{u\in W^{1,2}(\Sigma,g)}J^{\psi,h}_\epsilon(u)$
and $u_\epsilon$ satisfies the Euler-Lagrange equation
\begin{align}\label{E-L_equation}
\Delta_g u_\epsilon = 8\pi(1-\epsilon)\left(\frac{\psi}{\int_\Sigma \psi dv_g} - \frac{he^{u_\epsilon}}{\int_\Sigma he^{u_\epsilon}dv_g}\right).
\end{align}
Without distinguishing sequence and its subsequences, there are two possibilities:\\
$(i).$ If $u_\epsilon$ has a uniform bound in $W^{1,2}(\Sigma,g)$ (i.e., a bound does not depend on $\epsilon$), then $u_\epsilon$ converges to some $u_0$
in $W^{1,2}(\Sigma,g)$ and $u_0$ attains the infimum of $J^{\psi,h}$ in $W^{1,2}(\Sigma,g)$.\\
$(ii).$ If $||u_\epsilon||_{W^{1,2}(\Sigma,g)}\rightarrow\infty$ as $\epsilon\rightarrow0$, one calls $u_\epsilon$ blows up, we shall prove that
\begin{align}\label{bound-from-below}
\inf_{u\in W^{1,2}(\Sigma,g)}J^{\psi,h}(u)\geq-8\pi-8\pi\log\pi-4\pi\max_{y\in \Sigma\setminus Z}\left(2\log h(y)+A_y\right),
\end{align}
where $Z=\{y\in \Sigma:~h(y)=0\}$ and $A_y$ is a smooth function on $\Sigma$ defined in (\ref{green-function-expression}). Combining the results in $(i)$
and $(ii)$ one proves Theorem \ref{thm-m-t}.\\

When $u_\e$ blows up, we construct a blowup sequence $\{\phi_\epsilon\}_{\epsilon>0}$ (c.f. (\ref{test-function})). By a direct calculation one obtains
(\ref{bound-from-above}), letting $\e\ra0$ we have
\begin{align}\label{bound-from-above-1}
\inf_{u\in W^{1,2}(\S,g)}J^{\psi,h}(u) \leq \lim_{\e\ra0}J^{\psi,h}(\phi_\e) = -8\pi-8\pi\log\pi-4\pi\max_{y\in \S\setminus Z}\left(2\log h(y)+A_y\right).
\end{align}
Combining (\ref{bound-from-below}) and (\ref{bound-from-above-1}) we have
\begin{theorem}\label{infimum-of-J}
Let $(\Sigma,g)$ be a compact Riemannian surface. Let $J^{\psi,h}$, $J_\e^{\psi,h}$ and $u_\e$ be defined in (\ref{functional_J}), (\ref{functional_perturbed}) and (\ref{E-L_equation}) respectively.
If $u_\e$ blows up, then we have
\begin{align*}
\inf_{u\in W^{1,2}(\Sigma,g)}J^{\psi,h}(u)=-8\pi-8\pi\log\pi-4\pi\max_{y\in \S\setminus Z}\left(2\log h(y)+A_y\right),
\end{align*}
where $A_y$ is defined in (\ref{green-function-expression}).
\end{theorem}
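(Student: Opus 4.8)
The plan is to obtain the asserted identity by sandwiching $\inf_{W^{1,2}(\S,g)}J^{\psi,h}$ between the two bounds the paper produces. Once the lower bound (\ref{bound-from-below}) and the upper bound (\ref{bound-from-above-1}) are both established, their right-hand sides coincide and equality is forced, which is precisely the assertion of Theorem \ref{infimum-of-J}. Hence the substance lies in proving these two inequalities separately, the upper one being elementary and the lower one requiring the full blowup analysis of the sequence $u_\e$ solving (\ref{E-L_equation}).

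For (\ref{bound-from-above-1}) I would build an explicit concentrating test sequence. Since $A_y$ is smooth on $\S$ and $2\log h(y)\ra-\infty$ as $y\ra Z$, the quantity $2\log h(y)+A_y$ attains its maximum over $\S\setminus Z$ at some point $p$ with $h(p)>0$. Near $p$ I would set $\phi_\e$ equal, inside a small geodesic ball, to the standard Liouville bubble $-2\log(1+\lambda_\e^{-2}r^2)$ plus a suitable constant, and equal to the Green function $G_p$ of (\ref{green-function}) outside, gluing the two on an intermediate annulus and fixing the free constant by $\wan{\phi_\e}=0$. Expanding $J^{\psi,h}(\phi_\e)$ term by term, the Dirichlet energy separates into the universal bubble energy and the regular part of $G_p$ recorded by $A_p$ via (\ref{green-function-expression}), the linear term is killed by the normalization, and the logarithmic term yields $8\pi\log h(p)$ together with the constants $-8\pi-8\pi\log\pi$. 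Letting $\e\ra0$ gives (\ref{bound-from-above-1}).

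The lower bound (\ref{bound-from-below}) is the real obstacle and is where blowup analysis enters. Assuming $u_\e$ blows up, set $c_\e=\max_\S u_\e=u_\e(x_\e)\ra\infty$ and pass to a subsequence with $x_\e\ra p$; one first checks $p\notin Z$, since no mass of $he^{u_\e}$ can accumulate where $h$ vanishes. Rescaling about $x_\e$ at the scale $\lambda_\e$ fixed by normalizing $\lambda_\e^2(1-\e)h(x_\e)e^{c_\e}/\int_\S he^{u_\e}dv_g$, the rescaled functions $u_\e(\exp_{x_\e}(\lambda_\e\,\cdot))-c_\e$ converge in $C^1_{loc}(\mathbb{R}^2)$ to the finite-mass solution of the Liouville equation, so that $he^{u_\e}/\int_\S he^{u_\e}dv_g\weakto\d_p$ and, by (\ref{E-L_equation}), $u_\e-\wan{u_\e}\ra G_p$ away from $p$. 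The main difficulty is the exact energy accounting in the neck joining the bubble scale to the macroscopic scale; I would handle it with a Pohozaev-type identity on small geodesic balls, which isolates precisely the constant $A_p$ from the $O(1)$ term of (\ref{green-function-expression}) and gives $\lim_{\e\ra0}J^{\psi,h}_\e(u_\e)=-8\pi-8\pi\log\pi-4\pi(2\log h(p)+A_p)$. Finally, because $u_\e$ minimizes $J^{\psi,h}_\e$ while $J^{\psi,h}_\e(v)\ra J^{\psi,h}(v)$ for each fixed $v$, one has $\inf_{W^{1,2}(\S,g)}J^{\psi,h}\geq\liminf_{\e\ra0}J^{\psi,h}_\e(u_\e)$, and the trivial bound $2\log h(p)+A_p\leq\max_{y\in\S\setminus Z}(2\log h(y)+A_y)$ yields (\ref{bound-from-below}); combined with the construction above this proves the theorem.
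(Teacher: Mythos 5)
Your architecture is exactly the paper's: Theorem \ref{infimum-of-J} is obtained by sandwiching $\inf J^{\psi,h}$ between the blowup lower bound (\ref{bound-from-below}) and the test-function upper bound (\ref{bound-from-above-1}), and your test sequence (Liouville bubble glued to $G_p$ on an annulus, with the observation that $2\log h+A$ attains its supremum on $\S\setminus Z$ at a point where $h>0$) is essentially the paper's $\phi_\e$ in (\ref{test-function}). Your reduction $\inf J^{\psi,h}\geq\liminf_{\e\ra0}J^{\psi,h}_\e(u_\e)$ and the identification of the bubble limit $\varphi_0=-2\log(1+\pi h(p)|x|^2)$ also match Lemmas \ref{lemma-c} and \ref{bubble}.

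The weak point is the step you yourself flag as the main difficulty: the neck accounting that produces the exact constant $-8\pi-8\pi\log\pi-4\pi(2\log h(p)+A_p)$. You propose ``a Pohozaev-type identity on small geodesic balls,'' but this is an assertion, not an argument; Pohozaev identities in this setting naturally yield mass quantization ($\lam_\e^{-1}\int he^{u_\e}\ra 1$ on the bubble), not the additive constants $-8\pi-8\pi\log\pi$ nor the appearance of $A_p$. What the paper actually does is prove the pointwise comparison of Lemma \ref{m-p}: after neutralizing the sign-indefinite term $-8\pi\e\psi/\int_\S\psi\,dv_g$ by adding a bounded auxiliary potential $\zeta$ (Chen--Zhu's trick), the maximum principle gives $u_\e\geq G_{x_\e}-c_\e+2\log\lam_\e-2\log\pi-2\log h(p)-A_p+o_\e(1)+o_R(1)$ on $\S\setminus B_{Rr_\e}(x_\e)$, and this lower bound is then inserted into the integration by parts of $\int_{\S\setminus B_{Rr_\e}(x_\e)}|\na_g u_\e|^2dv_g$ against the equation and the Green representation, yielding (\ref{out-last}). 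Without Lemma \ref{m-p} or an equivalent pointwise control of $u_\e$ on the annulus, your lower bound does not close. A second, smaller under-justified step is the claim $p\notin Z$: in the paper this is not a one-line observation but the content of Section 5, requiring the concentration alternative (Proposition \ref{concentration-lemma}) and the improved Trudinger--Moser inequality of Lemma \ref{distribution-of-mass}, combined with the upper bound $J^{\psi,h}_\e(u_\e)\leq J^{\psi,h}_\e(0)$ to rule out concentration at a zero of $h$.
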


In view of (\ref{bound-from-above}), if one has (\ref{condition}) below, then we have $J^{\psi,h}(\phi_\e)<\inf_{u\in W^{1,2}(\S,g)}J^{\psi,h}(u)$ for sufficiently small $\e>0$. Then
 Theorem \ref{infimum-of-J} tells us that no blowup happens, so $J^{\psi,h}$ achieves its infimum at some function $u\in W^{1,2}(\S,g)$. Precisely, we obtain the following existence theorem.
\begin{theorem}\label{existence}
Let $(\S,g)$ be a compact Riemannian surface, $K_g$ be its Gaussian curvature.  Let $\psi$, $h$ be two smooth functions on $\Sigma$ with
$\int_\S \psi dv_g \neq0$ and $h\geq0$, $h\nequiv0$. Denote $Z=\{y\in\S:~h(y)=0\}$. Suppose $2\log h(y)+A_y$ attains its supremum in $\S\setminus Z$
at $p$. Let $b_1(p)$ and $b_2(p)$ be the constants in the expression (\ref{green-function-expression}). In a normal coordinate system around
$p$ we write $\na_g h(p)=\left(k_1(p), k_2(p)\right)$. If
\begin{align}\label{condition}
&\la_g h(p)+2\le[b_1(p)k_1(p)+b_2(p)k_2(p)\ri]\nonumber\\
>&-\le[4\pi\le(\f{\psi(p)}{\int_\S \psi dv_g}+1\ri)+\le(b_1^2(p)+b_2^2(p)\ri)-2K_g(p)\ri]h(p),
\end{align}
then the infimum of the functional $J^{\psi,h}$ in $W^{1,2}(\S, g)$ can be attained at some $u\in C^{\infty}(\Sigma)$ which satisfies
\begin{align}\label{equation-u}
\la_g u = 8\pi\left(\frac{\psi}{\int_\S \psi dv_g}-\frac{he^u}{\int_\Sigma he^u dv_g}\right).
\end{align}
\end{theorem}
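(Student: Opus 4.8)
The plan is to derive Theorem \ref{existence} from Theorem \ref{infimum-of-J} by a test-function argument that rules out blowup. Recall the dichotomy recorded just after (\ref{E-L_equation}): the minimizers $u_\e$ of the perturbed functionals $J^{\psi,h}_\e$ either stay bounded in $W^{1,2}(\S,g)$, in which case a subsequence converges to a minimizer $u_0$ of $J^{\psi,h}$ (alternative $(i)$), or they blow up (alternative $(ii)$). I will show that condition (\ref{condition}) forces alternative $(i)$, and then extract smoothness of the minimizer from elliptic regularity.

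First I would argue by contradiction, assuming that $u_\e$ blows up. Then Theorem \ref{infimum-of-J} pins down the infimum exactly,
\be\label{proposal-inf-value}
\inf_{u\in W^{1,2}(\S,g)}J^{\psi,h}(u)=-8\pi-8\pi\log\pi-4\pi\le(2\log h(p)+A_p\ri),
\ee
where $p\in\S\setminus Z$ realizes the supremum of $2\log h(y)+A_y$ (so $h(p)>0$). Since the right-hand side of (\ref{proposal-inf-value}) is a genuine lower bound for $J^{\psi,h}$ over every admissible function, it suffices to exhibit a single $v\in W^{1,2}(\S,g)$ with $J^{\psi,h}(v)$ strictly below this value to reach a contradiction. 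The candidate is the blowup sequence $\phi_\e$ of (\ref{test-function}), and the real task is to sharpen the estimate (\ref{bound-from-above}) one order beyond the limit already computed in (\ref{bound-from-above-1}).

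The heart of the matter, and the main obstacle, is this second-order asymptotic expansion of $J^{\psi,h}(\phi_\e)$. The function $\phi_\e$ is a Liouville bubble centered at $p$: in a normal coordinate system its inner profile behaves like $-2\log(1+\pi\lambda r^2)$ up to additive constants, while outside a small geodesic ball it is matched to the Green function $G_p$ so as to reproduce the expansion (\ref{green-function-expression}). Evaluating the three pieces of the functional requires: the Dirichlet energy $\f{1}{2}\int_\S|\na_g\phi_\e|^2dv_g$, whose transition-region contribution produces the coefficients $b_1(p),b_2(p)$ and $c_1,c_2,c_3$; the weighted mean $8\pi\wan{\phi_\e}$ built from $\int_\S\psi\phi_\e\,dv_g$; and $-8\pi\log\int_\S he^{\phi_\e}dv_g$, whose expansion brings in the Taylor data of $h$ at $p$, namely $h(p)$, $\na_g h(p)=(k_1(p),k_2(p))$ and $\la_g h(p)$. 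Crucially, in normal coordinates the area element expands as $dv_g=\le(1-\tfrac{1}{6}K_g(p)r^2+O(r^3)\ri)r\,dr\,d\th$, which is how the Gaussian curvature $K_g(p)$ enters the bookkeeping.

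After the leading order reproduces the right-hand side of (\ref{proposal-inf-value}), all second-order terms must be collected, and I expect them to assemble precisely into a negative constant multiple of
\be
\la_g h(p)+2\le[b_1(p)k_1(p)+b_2(p)k_2(p)\ri]+\le[4\pi\le(\f{\psi(p)}{\int_\S\psi dv_g}+1\ri)+\le(b_1^2(p)+b_2^2(p)\ri)-2K_g(p)\ri]h(p),
\ee
which is exactly the quantity whose positivity is asserted by condition (\ref{condition}) (after moving the $h(p)$-terms to the left). Hence, when (\ref{condition}) holds, one obtains $J^{\psi,h}(\phi_\e)<\inf_{u}J^{\psi,h}(u)$ for all sufficiently small $\e>0$, contradicting (\ref{proposal-inf-value}). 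Therefore blowup cannot occur, $u_\e$ is bounded in $W^{1,2}(\S,g)$, and alternative $(i)$ delivers a minimizer $u_0$ of $J^{\psi,h}$. Passing to the limit $\e\ra0$ in (\ref{E-L_equation}) shows that $u_0$ solves (\ref{equation-u}); since Theorem \ref{thm-m-t} yields $e^{u_0}\in L^p(\S)$ for every $p$, the right-hand side of (\ref{equation-u}) lies in every $L^p$, and a standard elliptic bootstrap upgrades $u_0$ to $C^\infty(\S)$. The delicate, computational core is the exact matching of the second-order terms to (\ref{condition}); the remaining steps are standard.
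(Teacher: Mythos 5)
Your strategy coincides with the paper's for the case $h>0$: assume blowup, use the exact value of the infimum from Theorem \ref{infimum-of-J} (equivalently the lower bound (\ref{bound-from-below})), and beat it with the test functions (\ref{test-function}), whose second-order expansion (\ref{bound-from-above}) has coefficient of $\e(-\log\e)$ equal to $-\f{2\pi}{h(p)}$ times the quantity in (\ref{condition}); your guess for the form of that coefficient is correct, although the computation itself (the expansions of the Dirichlet energy, of $\int_\S\psi\phi_\e dv_g$ and of $\int_\S he^{\phi_\e}dv_g$, and the choice $\a=(\e\log(-\log\e))^{-1/4}$) is only asserted, not carried out.

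The genuine gap is the case $Z\neq\emptyset$, i.e.\ $h\geq0$ with zeros, which is part of the hypotheses of Theorem \ref{existence}. Your argument leans on Theorem \ref{infimum-of-J} as a black box, but the proof of that theorem (and of the lower bound (\ref{bound-from-below}) it rests on) requires the blowup profile of Lemma \ref{bubble}, namely $\v_0(x)=-2\log(1+\pi h(p)|x|^2)$ with $p=\lim_{\e\ra0}x_\e$ the blowup point; this profile, the normalization (\ref{in-is-1}), and the pointwise bound of Lemma \ref{m-p} all degenerate if $h(p)=0$. So before you may invoke Theorem \ref{infimum-of-J} you must rule out that the blowup occurs at a zero of $h$, and this is where the real new difficulty of the theorem lies. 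The paper devotes all of Section 5 to it: an Aubin-type distribution-of-mass inequality (Lemma \ref{distribution-of-mass}), a Chang--Yang/Ding--Jost--Li--Wang concentration alternative (Proposition \ref{concentration-lemma}), the observation that the normalized minimizers $v_\e=u_\e-\log\int_\S e^{u_\e}dv_g$ concentrate at some $p'$ with $h(p')>0$ (obtained by playing the upper bound $J_\e^{\psi,h}(u_\e)\leq J_\e^{\psi,h}(0)$ against Theorem \ref{thm-m-t}), and finally the identification $p'=p$ (otherwise $c_\e-\log\lam_\e$ would stay bounded, contradicting Lemma \ref{lemma-c}). None of this appears in your proposal, so as written it only establishes the theorem under the stronger hypothesis $h>0$.
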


There are three motivations for the study of this paper:\\
\underline{Motivation 1.}  Ding-Jost-Li-Wang \cite{DJLW97} studied existence of the Kazdan-Warner equation $\la_g u=8\pi-8\pi he^u$
on a compact Riemannian surface with volume $1$. First, they used a variational method to derive a lower bound for $J^{1,h}$ in
$W^{1,2}(\S,g)$; Then, they construct a blowup sequence $\{\phi_\e\}_{\e>0}$ to display that no blowup happens and obtained a sufficient
condition ((\ref{condition}) with $\psi\equiv1$) for the existence of the Kazdan-Warner equation (c.f. \cite{KW74}). Our first motivation is to generalize
these results, see Theorems \ref{thm-m-t} and \ref{existence}. \\
\underline{Motivation 2.} Let $(\S,g)$ be a compact Riemannian surface, $K_g$ be its Gaussian curvature.
The Liouville energy of metric $\wan{g}=e^u g$
 with respect to metric $g$ is represented as $L_g(\wan{g})=\int_\S |\na_g u|^2 dv_g+4\int_\S K_g udv_g$. When $(\S,g)$ is a topological
  two sphere with volume $4\pi$ and bounded curvature $K_g$, Chen-Zhu \cite{CZ13} proved that $L_g(\wan{g})$ is bounded from below in
  $W^{1,2}(\S,g)$.
Their proof  is analytic, does not rely on the uniformization theorem and the Onofri inequality.
In fact, this problem is equivalent to prove that $J^{1,1}$ is bounded from below in $W^{1,2}(\S,g)$.
Our second motivation is  generalize Chen-Zhu's result to general Riemannian surfaces. This is our major motivation.\\
\underline{Motivation 3.} Yang and the author \cite{YZ2016} weakened the condition $h>0$ in \cite{DJLW97}
to $h\geq0$, $h\nequiv0$. Our third motivation is study existence of the generalized Kazdan-Warner equation (\ref{equation-u})
under this condition.\\

We refer the readers to \cite{WL, YangJGA, LWang, WX, BLin} and references therein for more relevant works. \\

\textbf{Concluding remark}: In this paper, we shall follow closely the lines of \cite{DJLW97} and \cite{YZ2016}. We would like to point out two things:
First, in the proof of Theorem \ref{thm-m-t} when we estimate the integral $\int_\S |\na_g u_\e|^2 dv_g$, we divide it into two parts
$$\int_{B_{Rr_\e}(x_\e)}|\na_g u|^2 dv_g~~~~\&~~~~\int_{\S\setminus B_{Rr_\e}(x_\e)}|\na_g u|^2 dv_g$$
instead of three parts
$$\int_{B_{Rr_\e}(x_\e)}|\na_g u|^2 dv_g~~~~\&~~~~\int_{B_{\d}(x_\e)}|\na_g u|^2 dv_g~~~~\&~~~~\int_{\S\setminus B_\d(x_\e)}|\na_g u|^2 dv_g$$ in \cite{DJLW97},
which can simplify the proof of Theorem \ref{thm-m-t}. Second, when $u_\e$ blows up, Ding-Jost-Li-Wang \cite{DJLW97}
proved that $$\inf_{u\in W^{1,2}(\S,g)}J^{1,h}(u)\geq-8\pi-8\pi\log\pi-4\pi\max_{y\in \S}\left(2\log h(y)+A_y\right).$$
we say more about this point. In fact, we shall prove in Theorem \ref{infimum-of-J}
that $$\inf_{u\in W^{1,2}(\S,g)}J^{\psi,h}(u)=-8\pi-8\pi\log\pi-4\pi\max_{y\in \S\setminus Z}\left(2\log h(y)+A_y\right)$$ for a general smooth $\psi$ satisfies
$\int_\S \psi dv_g\neq0$, where $Z=\{ y\in\S:~ h(y)=0\}$. \\

\textbf{\underline{Some main notations:}}\\
\begin{align*}
&\bullet~~ \bar{u}=\frac{1}{\mbox{Vol}_g(\S)}\int_\S u dv_g~~~~~~~~~~~~~~~~~\bullet~~ \wan{u}=\frac{1}{\int_\S \psi dv_g}\int_\S \psi u dv_g\no\\
&\bullet~~ \wan{X}=\le\{u\in W^{1,2}(\S): \wan{u}=0\ri\}~~~~~~~~\bullet~~Z=\{ y\in\S:~ h(y)=0\}\nonumber\\
&\bullet~~ ||\cdot||_p=\left(\int_\S |\cdot|^p dv_g\right)^{1/p},~~L^p-\mbox{norm}~\mbox{on}~(\Sigma,g)\nonumber\\
&\bullet~~ J^{\psi,h}(u)=\frac{1}{2}\int _{\Sigma}|\nabla_g u|^2dv_g + 8\pi\frac{1}{\int_\Sigma \psi dv_g}\int_\Sigma \psi udv_g-8\pi\log\int _{\Sigma}he^{u}dv_g\nonumber\\
&\bullet~~ J^{\psi,h}_\epsilon(u)=\frac{1}{2}\int_\Sigma |\nabla_g u|^2 dv_g + 8\pi(1-\epsilon)\frac{1}{\int_\Sigma \psi dv_g}\int_\Sigma \psi u dv_g -8\pi(1-\epsilon)\log\int_\Sigma he^u dv_g
\end{align*}

The paper is organized as follows: In Section 2, we give three key inequalities. The proof of Theorem \ref{thm-m-t} is given in Section 3.
In Sections 4 and 5, we divide the proof of Theorem \ref{existence} into two parts:
$h>0$ and $h\geq0$, $h\nequiv0$.

Throughout this paper, we use $C$ to denote a positive constant
and its changes from line to line. We do not distinguish sequence and its subsequences in this paper.

\section{ Three key inequalities }

In this section, we present three key inequalities which are very important in the following study.

\subsection{ The Trudinger-Moser inequality on a compact Riemannian surface }

\begin{lemma}\label{lemma-m-t-ineq}(\cite{F93, DJLW97})
Let $(\S,g)$ be a compact Riemannian surface. For any $u\in W^{1,2}(\S, g)$ we have
\begin{align}\label{m-t-ineq}
\int_\S e^{u} dv_g\leq C_\S  \exp\left\{\f{1}{16\pi}||\na_g u||_2^2+\bar{u}\right\},
\end{align}
where $\bar{u}=\f{1}{\mbox{Vol}_g(\S)}\int_\S u dv_g$ and $C_\S$ is a positive constant depending only on $(\S, g)$.
\end{lemma}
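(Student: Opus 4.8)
The plan is to derive the stated inequality from the sharp Moser--Trudinger--Fontana inequality on $(\S,g)$, which is the genuine analytic input behind the cited references \cite{F93, DJLW97}. That inequality asserts the existence of a constant depending only on $(\S,g)$ such that
\begin{align*}
\sup_{\ba{w}=0,\ ||\na_g w||_2\leq 1}\int_\S e^{4\pi w^2}\,dv_g<\infty.
\end{align*}
I would quote this as given and treat everything else as an elementary reduction to it; the heavy lifting (the blowup/symmetrization argument yielding the sharp constant $4\pi$) lives entirely inside this cited result.

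First I would normalize by the mean value. Setting $v=u-\ba{u}$, one has $\ba{v}=0$ and $\na_g v=\na_g u$, and since $\int_\S e^u\,dv_g=e^{\ba{u}}\int_\S e^v\,dv_g$, it suffices to prove
\begin{align*}
\int_\S e^v\,dv_g\leq C_\S\exp\le\{\f{1}{16\pi}||\na_g v||_2^2\ri\}
\end{align*}
for every mean-zero $v$, after which multiplying back by $e^{\ba{u}}$ reproduces the claim. The degenerate case $||\na_g v||_2=0$ forces $v\equiv0$ (a mean-zero function with vanishing gradient), which is trivial once $C_\S$ is enlarged to absorb $\mbox{Vol}_g(\S)$, so I would assume $t:=||\na_g v||_2>0$ and rescale to $w:=v/t$, so that $\ba{w}=0$ and $||\na_g w||_2=1$.

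The key pointwise step is a Young-type splitting of the exponent tuned to the sharp constant. Applying $ab\leq\f{a^2}{2\beta}+\f{\beta b^2}{2}$ with $\beta=8\pi$ to the product $v=t\,w$ gives
\begin{align*}
v=t\,w\leq\f{t^2}{16\pi}+4\pi w^2.
\end{align*}
Exponentiating and integrating, the factor $e^{t^2/16\pi}$ pulls out of the integral and the remaining $\int_\S e^{4\pi w^2}\,dv_g$ is bounded by a constant depending only on $(\S,g)$ via the Fontana inequality, since $\ba{w}=0$ and $||\na_g w||_2=1$. Recalling $t^2=||\na_g v||_2^2=||\na_g u||_2^2$ completes the argument. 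The only delicate point is the matching of the exponential constants: the choice $\beta=8\pi$ is forced, as it simultaneously makes the coefficient of $w^2$ equal to the sharp value $4\pi$ and the coefficient of $t^2$ equal to $\f{1}{16\pi}$; any other split either overshoots the admissible exponent in Fontana's inequality or fails to reproduce $\f{1}{16\pi}$. Beyond this bookkeeping I expect no obstacle, the substance being wholly contained in the cited sharp inequality.
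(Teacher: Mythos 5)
Your proposal is correct and is essentially the argument the paper implicitly relies on by citing Fontana and Ding--Jost--Li--Wang: the paper gives no proof of this lemma, and the standard derivation is exactly your reduction of the first-power exponential bound to the sharp $4\pi$-inequality via the splitting $tw\leq \f{t^2}{16\pi}+4\pi w^2$ after subtracting the mean. The bookkeeping, including the degenerate case $\|\na_g v\|_2=0$, is handled correctly.
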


For improvements of the above Trudinger-Moser inequality, we refer the readers to
Adimurthi-Druet \cite{A-D}, Yang \cite{YangTran,YangJDE},
Lu-Yang \cite{Lu-Yang}, Wang-Ye \cite{WY}, Yang-Zhu \cite{YZ-AGAG}, Tintarev \cite{Tint} and the author \cite{Zhu}.

\subsection{ The  Poincar\'e type inequality }
\begin{lemma}\label{lemma-poincare-ineq}
Let $(\S,g)$ be a compact Riemannian surface. Let $\psi$ be a smooth function on $\Sigma$ satisfying $\int_\Sigma \psi dv_g\neq0$.
Assume $q>1$, then for any $u\in W^{1,q}(\S, g)$ we have
\begin{align}\label{poincare-ineq}
\le(\int_\S |u-\wan{u}|^q dv_g\ri)^{1/q}\leq C_\S\le(\int_\S |\na_g u|^q dv_g\ri)^{1/q}
\end{align}
 where $\wan{u}=\f{1}{\int_\S \psi dv_g}\int_\S \psi u dv_g$ and $C_\S$ is a positive constant depending only on $(\S, g)$.
\end{lemma}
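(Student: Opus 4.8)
The plan is to deduce this weighted Poincar\'e inequality from the classical Poincar\'e--Wirtinger inequality $||u-\bar u||_q\leq C_\S||\na_g u||_q$, which holds on any compact Riemannian surface, by showing that replacing the volume mean $\bar u=\f{1}{\mbox{Vol}_g(\S)}\int_\S u\,dv_g$ by the weighted mean $\wan u$ alters the left-hand side only by a controllable amount. The key point is that both $\bar{\,\cdot\,}$ and $\wan{\,\cdot\,}$ fix constants, so their difference annihilates the constant part of $u$ and hence sees only the oscillation $u-\bar u$, which is exactly what Poincar\'e--Wirtinger controls.

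First I would record the elementary identity
\be
u-\wan u=(u-\bar u)-\f{1}{\int_\S \psi\,dv_g}\int_\S \psi\,(u-\bar u)\,dv_g,
\ee
which follows from $\wan{\bar u}=\bar u$ together with the linearity of $\wan{\,\cdot\,}$. Since the second term on the right is a constant, the triangle inequality gives $||u-\wan u||_q\leq ||u-\bar u||_q+\mbox{Vol}_g(\S)^{1/q}\,|\wan u-\bar u|$, so it remains only to bound the scalar $|\wan u-\bar u|$. Using $\int_\S\psi\,(u-\bar u)\,dv_g=(\wan u-\bar u)\int_\S\psi\,dv_g$ and H\"older's inequality,
\be
|\wan u-\bar u|=\f{1}{|\int_\S \psi\,dv_g|}\le|\int_\S \psi\,(u-\bar u)\,dv_g\ri|\leq \f{||\psi||_\infty\,\mbox{Vol}_g(\S)^{1-1/q}}{|\int_\S \psi\,dv_g|}\,||u-\bar u||_q,
\ee
where the hypothesis $\int_\S \psi\,dv_g\neq0$ is used precisely to keep this constant finite.

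Combining the two displays yields $||u-\wan u||_q\leq C_\S||u-\bar u||_q$ with $C_\S$ depending only on $q$, $||\psi||_\infty$, $\int_\S\psi\,dv_g$ and $\mbox{Vol}_g(\S)$, and then feeding in the classical Poincar\'e--Wirtinger inequality finishes the proof. I do not expect a genuine obstacle; the only thing to watch is that every constant produced (through H\"older and through Poincar\'e--Wirtinger) depends on $(\S,g)$, $q$ and $\psi$ but never on $u$. A fully self-contained alternative avoids quoting Poincar\'e--Wirtinger and argues by contradiction: were the inequality false there would exist $u_k$ with $\wan{u_k}=0$, $||u_k||_q=1$ and $||\na_g u_k||_q\to0$; since $q>1$, the bounded sequence $\{u_k\}\subset W^{1,q}(\S,g)$ would, after passing to a subsequence, converge in $L^q$ by Rellich--Kondrachov to a limit $u$ with $\na_g u=0$, hence constant, while continuity of $\wan{\,\cdot\,}$ forces $\wan u=u=0$, contradicting $||u||_q=1$. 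This last route is where the assumption $q>1$ is actually needed.
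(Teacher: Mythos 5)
Your proposal is correct, and it actually contains two proofs. The paper itself gives no argument: it states that the proof is ``completely analogous to the case that $\psi$ is a constant'' and refers to Theorem 2.10 in Hebey, whose proof is precisely the compactness/contradiction scheme you sketch at the end (normalize $\wan{u_k}=0$, $\|u_k\|_q=1$, $\|\na_g u_k\|_q\to0$, extract an $L^q$-convergent subsequence by Rellich--Kondrachov, conclude the limit is a constant with weighted mean zero, contradiction); this is also exactly how the paper proves the neighbouring Lemma \ref{lemma-s-p}. Your primary route --- the identity $u-\wan{u}=(u-\bar u)-\wan{u-\bar u}$ together with the H\"older bound $|\wan{u}-\bar u|\leq \|\psi\|_\infty\,\mathrm{Vol}_g(\Sigma)^{1-1/q}\,|\int_\Sigma\psi\,dv_g|^{-1}\|u-\bar u\|_q$ --- is a genuinely different and more elementary reduction to the classical Poincar\'e--Wirtinger inequality: it needs no compactness, yields an explicit constant, and works equally well for $q=1$. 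Its only cost is that it presupposes the unweighted inequality, whereas the contradiction argument is self-contained. One small point worth keeping from your write-up: the constant unavoidably depends on $\psi$ (through $\|\psi\|_\infty$ and $\int_\Sigma\psi\,dv_g$, or through the normalization in the compactness argument), so the lemma's phrase ``depending only on $(\Sigma,g)$'' is a harmless imprecision that you are right to flag.
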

Since the proof of Lemma \ref{lemma-poincare-ineq} is completely analogous to the case that $\psi$ is a constant, we omit it here
and refer the readers to Theorem 2.10 in \cite{Hebey}.

\subsection{ The Sobolev-Poincar\'e type inequality }

\begin{lemma}\label{lemma-s-p}
Let $(\S,g)$ be a compact Riemannian surface. Let $\psi$ be a smooth function on $\S$ with  $\int_\S \psi dv_g\neq0$.
Assume $p\geq1$, then for any $u\in W^{1,2}(\S, g)$ we have
\begin{align*}
\le(\int_\S |u-\wan{u}|^p dv_g\ri)^{1/p}\leq C_\S\le(\int_\S |\na_g u|^2 dv_g\ri)^{1/2}.
\end{align*}
 where $\wan{u}=\f{1}{\int_\S \psi dv_g}\int_\S \psi u dv_g$ and $C_\S$ is a positive constant depending only on $(\S, g)$.
\end{lemma}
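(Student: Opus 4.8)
The plan is to combine two ingredients: the borderline Sobolev embedding $W^{1,2}(\S,g)\hookrightarrow L^p(\S,g)$, valid for every finite $p$ precisely because $\dim\S=2$, together with the Poincar\'e inequality already established in Lemma~\ref{lemma-poincare-ineq}. The weighted mean $\wan{u}$ plays no analytic role beyond turning the left-hand side into a quantity that vanishes on constants; once it is subtracted off, the asserted estimate is just the statement that a function with controlled oscillation (in the $\wan{\cdot}$-sense) has small $L^p$ norm, and this is exactly what the two quoted inequalities deliver in tandem.

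First I would set $v=u-\wan{u}$, so that $\na_g v=\na_g u$ and $\wan{v}=0$. For each fixed $p\in[1,\infty)$ the critical Sobolev embedding on the compact surface $(\S,g)$ provides a constant $C_\S=C_\S(p)$ with $\|v\|_p\le C_\S\le(\|\na_g v\|_2+\|v\|_2\ri)$; for $p\le 2$ one can even skip the embedding and simply use H\"older's inequality to pass from $\|v\|_2$ to $\|v\|_p$. It then remains only to absorb the lower-order term $\|v\|_2$, and for this I apply Lemma~\ref{lemma-poincare-ineq} with the admissible choice $q=2$, which gives $\|v\|_2=\|u-\wan{u}\|_2\le C_\S\|\na_g u\|_2$. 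Chaining the two estimates and using $\na_g v=\na_g u$ yields $\|u-\wan{u}\|_p\le C_\S\|\na_g u\|_2$, which is the claim.

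The only genuinely nontrivial input is the embedding $W^{1,2}\hookrightarrow L^p$ for all finite $p$: this is the critical case of the Sobolev theorem, it fails at $p=\infty$, and it forces the constant to depend on $p$. If one prefers a self-contained derivation rather than quoting the Sobolev theorem, the same $L^p$ bound—indeed with the sharp $\sqrt{p}$ growth of the constant—can be extracted from the Trudinger--Moser inequality of Lemma~\ref{lemma-m-t-ineq}: applying \eqref{m-t-ineq} to $\pm t\,w$ with $w=u-\bar u$, expanding the exponential and optimizing in $t>0$, one controls every even moment $\int_\S w^p\,dv_g$ by $\le(C\sqrt{p}\,\|\na_g u\|_2\ri)^p$. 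The one point requiring care is that Lemma~\ref{lemma-m-t-ineq} is normalized by the ordinary mean $\bar u$, whereas our seminorm uses $\wan{u}$; these are reconciled by the identity $u-\wan{u}=w-\wan{w}$ together with $|\wan{w}|\le C_\S\|w\|_2\le C_\S\|\na_g u\|_2$, so the discrepancy between the two averages is itself already controlled by the gradient. I expect this bookkeeping between the two means, rather than any analytic difficulty, to be the main (and essentially the only) step demanding attention.
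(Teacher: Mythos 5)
Your proof is correct, but it takes a genuinely different route from the paper's. The paper argues by contradiction via compactness: assuming the inequality fails along a sequence $u_n$, it normalizes $v_n=(u_n-\wan{u}_n)/\|u_n-\wan{u}_n\|_p$, so that $\|v_n\|_p=1$, $\|\na_g v_n\|_2\leq 1/n$ and $\wan{v}_n=0$; Lemma \ref{lemma-poincare-ineq} then bounds $v_n$ in $W^{1,2}(\S,g)$, a weak limit $v_0$ is constant by lower semicontinuity of the Dirichlet energy, and the two facts $\wan{v}_0=0$ (which forces $v_0\equiv0$, using $\int_\S\psi\,dv_g\neq0$) and $\|v_0\|_p=1$ (from strong $L^p$ convergence) are incompatible. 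You instead give a direct proof: subtract the weighted mean, invoke the continuous embedding $W^{1,2}(\S,g)\hookrightarrow L^p(\S,g)$ for finite $p$, and absorb the lower-order $L^2$ term through Lemma \ref{lemma-poincare-ineq} with the admissible choice $q=2$. Both arguments rest on the same two pillars---the borderline two-dimensional Sobolev embedding (the paper needs its compact form to pass $\|v_n\|_p=1$ to the limit, you only need the continuous form) and the weighted Poincar\'e inequality---but yours is shorter and quantitative: tracked through your Trudinger--Moser moment argument it even yields the sharp $O(\sqrt{p})$ growth of the constant, whereas the contradiction argument produces no explicit constant. Your observation that the constant necessarily depends on $p$ is a fair correction to the statement as printed, which asserts dependence on $(\S,g)$ alone; since the paper only applies the lemma with the fixed exponent $p=4$ in its proof of Lemma \ref{distribution-of-mass}, this imprecision is harmless. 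The bookkeeping identity $u-\wan{u}=w-\wan{w}$ with $w=u-\bar{u}$, together with $|\wan{w}|\leq C_\S\|w\|_2$, that reconciles the two means in your alternative derivation is likewise correct.
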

\begin{proof}
The proof is standard. Suppose not, there exists a sequence of functions $\{u_n\}_{n=1}^{\infty}\subset W^{1,2}(\S, g)$ such that
\begin{align*}
\le(\int_\S |u_n-\wan{u}_n|^p dv_g\ri)^{1/p}\geq n\le(\int_\S |\na_g u_n|^2 dv_g\ri)^{1/2}.
\end{align*}
Let
$$v_n=\f{u_n-\wan{u}_n}{\le(\int_\S |u_n-\wan{u}_n|^p dv_g\ri)^{1/p}}.$$
Easily check can find that
\begin{align*}
||v_n||_p=1,~~||\na_g v_n||_2\leq\f{1}{n},~~\wan{v}_n=0.
\end{align*}
By Lemma \ref{lemma-poincare-ineq} we have
$$||v_n||_2\leq C.$$
So
$$||v_n||_{W^{1,2}(\S, g)}\leq C.$$
Therefore,
\begin{align}\label{s-p-1}
&v_n\weakto v_0~~\mbox{weakly~in}~W^{1,2}(\S, g),\nonumber\\
&v_n\to v_0~~\mbox{strongly~in}~L^q(\S, g)~(\forall q\geq1).
\end{align}
By the lower semi-continuous property of $||\na_g\cdot||_2^2$ one knows $||\na_g v_0||_2^2=0$, so $v_0$ is a constant.
From (\ref{s-p-1}), we have
\begin{align}\label{s-p-2}
\wan{v}_0=0
\end{align}
and
\begin{align}\label{s-p-3}
||v_0||_p=1.
\end{align}
Since $v_0$ is a constant, (\ref{s-p-2}) tells us that $v_0\equiv0$. This is a contraction with (\ref{s-p-3}).
This ends the proof of the lemma.
\end{proof}

\section{ Proof of Theorem \ref{thm-m-t} }

In this section, we shall derive the lower bound of $J^{\psi,h}$. As a consequence, we shall prove Theorem \ref{thm-m-t}.\\

Since $J_\e^{\psi,h}(u+c)=J_\e^{\psi,h}(u)$ and $J^{\psi,h}(u+c)=J^{\psi,h}(u)$ for any $\e\in(0,1)$, $c\in\mathbb{R}$ and any $u\in W^{1,2}(\S, g)$, we have
$$\inf_{u\in\wan{X}}J_\e^{\psi,h}(u)=\inf_{u\in W^{1,2}(\S, g)}J^{\psi,h}_\e(u),~~\inf_{u\in\wan{X}}J^{\psi,h}(u)=\inf_{u\in W^{1,2}(\S, g)}J^{\psi,h}(u),$$
where $\wan{X}=\le\{u\in W^{1,2}(\S): \wan{u}=0\ri\}$. Therefore, we can without loss of generality assume that $u_\e\in \wan{X}$. There are two possibilities:\\

\textbf{Case a).} $||\na_g u_\e||_2\leq C$.\\

Since $u_\e\in \wan{X}$, by the Poincar\'e inequality (\ref{poincare-ineq}) we know
$u_\e$ is bounded in $W^{1,2}(\S,g)$. Then we may assume
\begin{align}\label{w-e-conv}
&u_\e\rightharpoonup u_0~~\mbox{weakly~in}~~W^{1,2}(\S, g),\nonumber\\
&u_\e\ra u_0~~\mbox{strongly~in}~~L^p(\S, g),\,\,\forall p\geq1.
\end{align}
This together with the Trudinger-Moser inequality (\ref{m-t-ineq}) and the H\"{o}lder inequality
leads to
\begin{align}
\label{h-conv}
\int_\S h\le(e^{u_\e}-e^{u_0}\ri)dv_g
&=\int_\S h \int_0^1 \frac{d}{dt}e^{u_0+t(u_\e-u_0)}dtdv_g\nonumber\\
&=\int_0^1 \int_\S h e^{u_0+t(u_\e-u_0)}(u_\e-u_0)dv_g dt\nonumber\\
&\ra0~~as~~\e\ra0.
\end{align}
From (\ref{w-e-conv}) and the H\"{o}lder inequality we have
\begin{align}\label{K_g-conv}
\int_\S \psi (u_\e-u_0)dv_g\ra 0~~as~~\e\ra0.
\end{align}
The lower semi-continuous of $||\na_g\cdot||^2_2$ together with (\ref{w-e-conv})-(\ref{K_g-conv}) gives us
\begin{align*}
\inf_{u\in \wan{X}}J^{\psi,h}(u)\geq\liminf_{\e\ra0}\inf_{u\in\wan{X}}J^{\psi,h}_\e(u)=\liminf_{\e\ra0}J^{\psi,h}_\e(u_\e)\geq J^{\psi,h}(u_0).
\end{align*}
That is to say, $u_0\in \wan{X}$ attains the infimum of $J^{\psi,h}$ in $\wan{X}$ and satisfies (\ref{equation-u}).
The elliptic regularity theory implies that $u_0\in C^{\infty}(\S)$. The proof of Theorems \ref{thm-m-t} and \ref{existence} terminates in this case.\\

\textbf{Case b).} $||\na_g u_\e||_2\ra+\infty$ as $\e\ra0$.\\

Though the $L^2$-norm of the gradient of $u_\e$ is infinity, we have
\begin{lemma}\label{lemma-l_q-norm}
For any $1<q<2$, $||\na_g u_\e||_q\leq C.$
\end{lemma}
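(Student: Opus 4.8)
The plan is to extract the bound directly from the Euler--Lagrange equation \eqref{E-L_equation}, which I rewrite as $\la_g u_\e=f_\e$ with
\begin{align*}
f_\e=8\pi(1-\e)\le(\f{\psi}{\int_\S \psi dv_g}-\f{he^{u_\e}}{\int_\S he^{u_\e}dv_g}\ri).
\end{align*}
The first step is to verify that $f_\e$ has mean zero and a uniformly bounded $L^1$-norm. Integrating the equation over $\S$ shows $\int_\S f_\e dv_g=0$. Since $h\geq0$ we have $\int_\S \f{he^{u_\e}}{\int_\S he^{u_\e}dv_g}dv_g=1$, so the triangle inequality gives
\begin{align*}
\|f_\e\|_1\leq 8\pi(1-\e)\le(\f{\|\psi\|_1}{|\int_\S \psi dv_g|}+1\ri)\leq C,
\end{align*}
a bound independent of $\e$.

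The second step is a classical $L^1\to W^{1,q}$ estimate. Let $\mathcal{G}(x,y)$ be the Green function of $\la_g$ on $(\S,g)$; it has a logarithmic singularity on the diagonal, so $|\na_x\mathcal{G}(x,y)|\leq C/\mbox{dist}(x,y)$ near the diagonal. Because $\int_\S f_\e dv_g=0$, the representation formula yields $\na_g u_\e(x)=\int_\S \na_x\mathcal{G}(x,y)f_\e(y)dv_g(y)$ and hence
\begin{align*}
|\na_g u_\e(x)|\leq C\int_\S \f{|f_\e(y)|}{\mbox{dist}(x,y)}dv_g(y).
\end{align*}
Applying Minkowski's integral inequality and then the $L^1$ bound,
\begin{align*}
\|\na_g u_\e\|_q\leq C\int_\S |f_\e(y)|\le\|\f{1}{\mbox{dist}(\cdot,y)}\ri\|_q dv_g(y)\leq C\|f_\e\|_1\sup_{y\in\S}\le\|\f{1}{\mbox{dist}(\cdot,y)}\ri\|_q.
\end{align*}
Since $\dim\S=2$, the quantity $\int_\S\mbox{dist}(x,y)^{-q}dv_g(x)$ is finite and bounded uniformly in $y$ exactly when $q<2$; combined with $\|f_\e\|_1\leq C$ this gives $\|\na_g u_\e\|_q\leq C$ for every $1<q<2$.

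The only genuine obstacle is the integrability threshold of the singular kernel: the estimate $\le\|\mbox{dist}(\cdot,y)^{-1}\ri\|_q<\infty$ holds precisely for $q<2$ and fails at $q=2$, which is exactly why the $L^2$-norm of $\na_g u_\e$ is allowed to diverge (this being Case b) while every lower $L^q$-norm stays controlled. No cancellation or fine structure of $f_\e$ is needed beyond its uniform $L^1$ bound, so the argument is robust; the subsequent blowup analysis must then be built on this weaker $W^{1,q}$ control rather than on a $W^{1,2}$ bound.
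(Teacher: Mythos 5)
Your proof is correct, but it takes a genuinely different route from the paper. Both arguments rest on the same single fact: the right-hand side $f_\e$ of (\ref{E-L_equation}) is uniformly bounded in $L^1(\S)$ because $h\geq 0$ forces $\int_\S he^{u_\e}dv_g^{-1}\int_\S he^{u_\e}\,dv_g=1$ while the $\psi$-term is a fixed smooth function. The paper then proceeds by duality: it writes $\|\na_g u_\e\|_q$ as a supremum of $\int_\S \na_g u_\e\cdot\na_g\zeta\, dv_g$ over mean-zero $\zeta$ with $\|\zeta\|_{W^{1,q'}}\leq 1$, integrates by parts to land on $\int_\S f_\e\,\zeta\, dv_g$, and uses the Sobolev embedding $W^{1,q'}(\S,g)\hookrightarrow C^0(\S)$ (valid precisely because $q'=q/(q-1)>2$) to bound this by $\|f_\e\|_1\|\zeta\|_{C^0}$. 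You instead represent $\na_g u_\e$ through the gradient of the Green function, with the standard singularity bound $|\na_x\mathcal{G}(x,y)|\leq C/\mathrm{dist}(x,y)$, and close the estimate with Minkowski's integral inequality, the threshold $q<2$ appearing as the integrability condition for $\mathrm{dist}(\cdot,y)^{-q}$ in dimension two. The two mechanisms are dual manifestations of the same phenomenon ($q<2$ on one side, $q'>2$ on the other), so neither buys more generality; the paper's version is shorter and avoids invoking the pointwise gradient estimate for the Green function (which you should cite or prove if you keep your route), while yours is more explicit about where the failure at $q=2$ comes from and fits naturally with the Brezis--Merle style estimates used later in Lemma \ref{lemma-u-to-g}. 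One presentational caveat: the duality step in the paper (restricting the supremum to gradients of mean-zero test functions) itself deserves a word of justification, so your self-contained representation-formula argument is a perfectly acceptable substitute.
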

\begin{proof}
Let $q'=q/(q-1)>2$, by equation (\ref{E-L_equation}) we have
\begin{align*}
||\na_g u_\e||_q
&\leq \sup_{||\zeta||_{W^{1, q'}(\S)}\leq1,~\int_\S \zeta dv_g=0} \int_\S \na_g u_\e \na_g \zeta dv_g\nonumber\\
&=\sup_{||\zeta||_{W^{1, q'}(\S)}\leq1,~\int_\S \zeta dv_g=0} \int_\S 8\pi(1-\epsilon)\left(-\frac{\psi}{\int_\Sigma \psi dv_g} + \frac{he^{u_\epsilon}}{\int_\Sigma he^{u_\epsilon}dv_g}\right) \zeta dv_g\nonumber\\
&\leq C,
\end{align*}
where in the last inequality we have used the Sobolev embedding $W^{1,q'}(\S, g)\hookrightarrow C^0(\S)$.
\end{proof}

Denote $\lam_\e=\int_\S he^{u_\e}dv_g$. We have
\begin{lemma}\label{l>0}
$\liminf_{\epsilon\ra 0}\lambda_\epsilon>0$.
\end{lemma}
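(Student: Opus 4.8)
The plan is to extract the needed lower bound on $\lambda_\e$ directly from the fact that $u_\e$ minimizes $J^{\psi,h}_\e$, together with the normalization $u_\e\in\wan{X}$ adopted at the start of Section 3. First I would produce a uniform upper bound for the infimum. Since $u_\e$ minimizes $J^{\psi,h}_\e$ over $\wan{X}$, testing against the trivial function $u\equiv0$ gives
\begin{align*}
J^{\psi,h}_\e(u_\e)=\inf_{u\in\wan{X}}J^{\psi,h}_\e(u)\leq J^{\psi,h}_\e(0)=-8\pi(1-\e)\log\int_\S h\,dv_g.
\end{align*}
Because $h\geq0$ and $h\not\equiv0$ force $\int_\S h\,dv_g>0$, the right-hand side is finite and converges to $-8\pi\log\int_\S h\,dv_g$ as $\e\to0$; hence $J^{\psi,h}_\e(u_\e)\leq C$ uniformly in $\e\in(0,1)$.

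Next I would use $\wan{u}_\e=0$ to eliminate the linear term. Since $u_\e\in\wan{X}$, the middle term of $J^{\psi,h}_\e(u_\e)$ vanishes, so
\begin{align*}
\f{1}{2}||\na_g u_\e||_2^2-8\pi(1-\e)\log\lambda_\e=J^{\psi,h}_\e(u_\e)\leq C.
\end{align*}
Rearranging yields
\begin{align*}
\log\lambda_\e\geq\f{1}{8\pi(1-\e)}\le(\f{1}{2}||\na_g u_\e||_2^2-C\ri).
\end{align*}
In Case b) we have $||\na_g u_\e||_2\to+\infty$, so the right-hand side tends to $+\infty$; therefore $\lambda_\e\to+\infty$, which is far stronger than the claimed $\liminf_{\e\to0}\lambda_\e>0$.

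The argument involves no genuine obstacle; it is essentially a bookkeeping of the definition of $J^{\psi,h}_\e$. The only points requiring care are that the upper bound on the infimum be uniform in $\e$, which holds because $\int_\S h\,dv_g$ is a fixed positive constant independent of $\e$, and that the earlier reduction to $\wan{X}$ legitimately lets us assume $\wan{u}_\e=0$, so that the linear term drops out \emph{exactly} rather than merely being controlled. Once these are in place the conclusion is immediate.
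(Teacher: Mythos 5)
Your proposal is correct and rests on exactly the same inequality as the paper, namely $J^{\psi,h}_\e(u_\e)\leq J^{\psi,h}_\e(0)=-8\pi(1-\e)\log\int_\S h\,dv_g$ combined with the sign of the gradient term; the paper phrases it as a contradiction (if $\liminf\lambda_\e=0$ then $J^{\psi,h}_\e(u_\e)\to+\infty$), while you rearrange directly and, using $||\na_g u_\e||_2\to\infty$ from Case b), obtain the stronger conclusion $\lambda_\e\to+\infty$. Both arguments are valid and essentially identical in substance.
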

\begin{proof}
\begin{align}\label{lll}
J_\e^{\psi,h}(u_\epsilon)=\inf_{u\in \wan{X}}J_\e^{\psi,h}(u)\leq
J_\e^{\psi,h}(0)\leq -8\pi(1-\e)\log\int_\Sigma hdv_g.
\end{align}
If $\liminf_{\epsilon\ra 0}\lambda_\epsilon=0$, then up to a subsequence we have
$$J_\e^{\psi,h}(u_\epsilon)=\f{1}{2}\int_\S |\na_g u_\epsilon|^2 dv_g - 8\pi(1-\epsilon)\log\lambda_\epsilon\ra +\infty$$
as $\e\ra0$, which contradicts (\ref{lll}). This ends the proof of Lemma \ref{l>0}.
\end{proof}
Let  $c_\e=\max_\S u_\e = u_\e(x_\e)$. Suppose $x_\e\ra p$ as $\e\ra0$, then
\begin{lemma}\label{lemma-c}
$c_\epsilon\ra+\infty$ as $\e\ra0$. Furthermore, we have $\lam_\e^{-1}e^{c_\e}\ra+\infty$ as $\e\ra0$.
\end{lemma}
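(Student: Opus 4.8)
The plan is to prove the two assertions in turn, in both cases exploiting the reduction $\wan{u}_\e=0$ made at the beginning of this section together with the \emph{a priori} bound (\ref{lll}). First I would show $c_\e\ra+\infty$. Since $\wan{u}_\e=0$, the middle term of $J_\e^{\psi,h}(u_\e)$ drops out, so $J_\e^{\psi,h}(u_\e)=\f12||\na_g u_\e||_2^2-8\pi(1-\e)\log\lam_\e$. Comparing with the upper bound $J_\e^{\psi,h}(u_\e)\leq-8\pi(1-\e)\log\int_\S h\,dv_g$ from (\ref{lll}) gives $8\pi(1-\e)\log\lam_\e\geq\f12||\na_g u_\e||_2^2-C$; as we are in Case b) where $||\na_g u_\e||_2\ra+\infty$, this forces $\log\lam_\e\ra+\infty$. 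On the other hand $\lam_\e=\int_\S he^{u_\e}dv_g\leq(\max_\S h)\,\mbox{Vol}_g(\S)\,e^{c_\e}$, whence $e^{c_\e}\geq\lam_\e/C\ra+\infty$ and therefore $c_\e\ra+\infty$.

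For the second assertion I would argue by contradiction, and this is where the real work lies. Suppose that, along a subsequence, $\lam_\e^{-1}e^{c_\e}\leq M$ for some constant $M$. Then $f_\e:=he^{u_\e}/\lam_\e\leq(\max_\S h)\,\lam_\e^{-1}e^{c_\e}\leq(\max_\S h)M$, so the right-hand side of the Euler--Lagrange equation (\ref{E-L_equation}) is bounded in $L^\infty(\S)$ uniformly in $\e$; in particular $||\la_g u_\e||_{q'}\leq C$ for every $q'>1$. Choosing $q\in(1,2)$ with conjugate exponent $q'$ and integrating by parts (using $\int_\S\la_g u_\e\,dv_g=0$ on the closed surface and that $\wan{u}_\e$ is constant) yields
\[
||\na_g u_\e||_2^2=-\int_\S(u_\e-\wan{u}_\e)\la_g u_\e\,dv_g\leq||u_\e-\wan{u}_\e||_q\,||\la_g u_\e||_{q'}.
\]
By the Poincar\'e inequality (\ref{poincare-ineq}) and Lemma \ref{lemma-l_q-norm} we have $||u_\e-\wan{u}_\e||_q\leq C||\na_g u_\e||_q\leq C$, so $||\na_g u_\e||_2\leq C$, contradicting $||\na_g u_\e||_2\ra+\infty$. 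Hence no such subsequence exists and $\lam_\e^{-1}e^{c_\e}\ra+\infty$.

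The routine points I would still verify are the smoothness of $u_\e$ (so that the integration by parts is legitimate) and the uniformity in $\e$ of every constant appearing above. The genuine crux, however, is the observation that a bounded $\lam_\e^{-1}e^{c_\e}$ makes the nonlinear term $f_\e$ uniformly bounded in $L^\infty$: this effectively linearizes the equation, and testing $\la_g u_\e$ against $u_\e-\wan{u}_\e$ via H\"older duality against the $W^{1,q}$ bound of Lemma \ref{lemma-l_q-norm} then contradicts the blowup of the Dirichlet energy. The first assertion, by contrast, is essentially immediate once the functional value is rewritten using $\wan{u}_\e=0$.
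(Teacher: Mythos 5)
Your proof is correct, but it follows a genuinely different route from the paper's on both halves. For $c_\e\ra+\infty$, the paper multiplies the Euler--Lagrange equation (\ref{E-L_equation}) by $u_\e$ and integrates, using $\wan{u}_\e=0$ and $u_\e\leq c_\e$ to get the energy bound $\int_\S|\na_g u_\e|^2dv_g\leq 8\pi(1-\e)c_\e$, from which $c_\e\ra+\infty$ is immediate in Case b); you instead read off $8\pi(1-\e)\log\lam_\e\geq\f12\|\na_g u_\e\|_2^2-C$ from the minimality bound (\ref{lll}) and then pass to $c_\e$ via $\lam_\e\leq Ce^{c_\e}$ --- equally valid, and as a by-product you get $\lam_\e\ra+\infty$, which strengthens Lemma \ref{l>0}. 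For the second assertion, the paper is quantitative: it feeds the bound $\|\na_g u_\e\|_2^2\leq 8\pi(1-\e)c_\e$ into the Trudinger--Moser inequality (\ref{m-t-ineq}) (with the mean value controlled via Poincar\'e and Young's inequality, choosing $\d=\f{1+2\e}{32\pi(1-\e)}$) to obtain $\lam_\e\leq Ce^{3c_\e/4}$, hence $\lam_\e^{-1}e^{c_\e}\geq Ce^{c_\e/4}\ra+\infty$. Your contradiction argument --- boundedness of $\lam_\e^{-1}e^{c_\e}$ forces the nonlinearity $\lam_\e^{-1}he^{u_\e}$ into $L^\infty$ uniformly, and then H\"older duality of $\la_g u_\e$ against $u_\e-\wan{u}_\e$ together with Lemma \ref{lemma-poincare-ineq} and Lemma \ref{lemma-l_q-norm} bounds the Dirichlet energy, contradicting Case b) --- is also sound (the integration by parts is legitimate since $u_\e$ is smooth by elliptic regularity, and $\int_\S\la_g u_\e\,dv_g=0$ on the closed surface). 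What the paper's approach buys is an explicit lower rate for $\lam_\e^{-1}e^{c_\e}$ (not actually needed later); what yours buys is independence from the Trudinger--Moser inequality at this step and a shorter, purely qualitative argument. Both proofs are complete.
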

\begin{proof}
 Multiplying both sides of the equation (\ref{E-L_equation}) by $u_\epsilon$ and integrating both sides on $(\S,g)$, we have
\begin{align}\label{lemma-c_1}
\int_\S |\na_g u_\e|^2 dv_g \leq 8\pi(1-\e)c_\e.
\end{align}
This implies $c_\epsilon\ra+\infty$ as $\e\ra0$ since $||\na_g u_\e||_2\ra+\infty$ as $\e\ra0$.

By Lemmas \ref{lemma-m-t-ineq} and \ref{lemma-poincare-ineq} one has
\begin{align}\label{lemma-c_2}
\lam_\e=\int_\S he^{u_\e} dv_g
\leq& C \exp\left\{\f{1}{16\pi}||\na_g u_\e||^2_2+\bar{u}_\e\right\}\nonumber\\
\leq& C \exp\left\{\left(\f{1}{16\pi}+\d\right)||\na_g u_\e||_2^2+C_\d\right\}.
\end{align}
Substituting (\ref{lemma-c_1}) into (\ref{lemma-c_2}) and choosing $\d=\f{1+2\e}{32\pi(1-\e)}$, then
\begin{align*}
\lam_\e^{-1} e^{c_\e}\geq C e^{\f{1}{4}c_\e}\ra+\infty~~\mbox{as}~\e\ra0.
\end{align*}
This ends the proof of Lemma \ref{lemma-c}.
\end{proof}

Choosing a local coordinate system $(U,z)$ around $p$, which satisfies $z(p)=0$.
Let $r_\e=\sqrt{\lam_\e}e^{-c_\e/2}$ and define
\begin{align*}
\varphi_\e(x)=u_\e(z(x_\e)+r_\e x)-\lam_\e,~~~~x\in \mathbb{B}_R(0)\subset z(U).
\end{align*}
Then in $\mathbb{B}_R(0)$,
\begin{align*}
\la_{g}\varphi_\e(x)=&8\pi(1-\e)\left(\f{\psi(z(x_\e)+r_\e x)}{\int_\S \psi dv_g}r_\e^2-h(z(x_\e)+r_\e x)e^{\varphi_\e(x)}\right)\nonumber\\
:=&f_\e(x).
\end{align*}
We have the following asymptotic phenomenon of $u_\e$ near the blowup point $p$.
\begin{lemma}\label{bubble}
\begin{align}\label{bubble-function}
\v_\e(x)\ra\v_0(x)=-2\log(1+\pi h(p)|x|^2)
\end{align}
in $C^{1}_{loc}(\mathbb{R}^2)$ as $\e\ra0$.
\end{lemma}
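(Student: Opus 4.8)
The plan is to show that the rescaled sequence $\v_\e$ converges in $C^1_{loc}(\mathbb{R}^2)$ to the standard bubble $\v_0(x) = -2\log(1+\pi h(p)|x|^2)$ by combining a uniform local $C^0$ bound, elliptic regularity, and identification of the limit equation. First I would observe that by construction $\v_\e(x) \le \v_\e(0) = u_\e(x_\e) - \lam_\e = c_\e - \lam_\e$ (recall $c_\e = \max_\S u_\e$), and that $\lam_\e^{-1}e^{c_\e} \to +\infty$ from Lemma \ref{lemma-c}, so $\v_\e(0) = \log(\lam_\e^{-1}e^{c_\e}) \to +\infty$; more importantly, after the vertical shift by $\lam_\e$ the value $\v_\e(0)$ should in fact stay bounded. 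Let me re-examine: with $r_\e = \sqrt{\lam_\e}\,e^{-c_\e/2}$, the natural normalization gives $h(x_\e)e^{\v_\e(0)} = h(x_\e)\lam_\e^{-1}e^{c_\e}$, which together with the definition of $\lam_\e$ keeps the nonlinear term $O(1)$ on $\mathbb{B}_R(0)$. The key first step is therefore to establish that $\v_\e$ is \emph{uniformly bounded} on each $\mathbb{B}_R(0)$: the upper bound $\v_\e \le \v_\e(0)$ is immediate, and I expect $\v_\e(0)$ to be bounded because $r_\e$ was chosen precisely so that $f_\e$ stays bounded in $L^\infty(\mathbb{B}_R(0))$.

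Next I would control the right-hand side $f_\e(x) = 8\pi(1-\e)\big(\tfrac{\psi(z(x_\e)+r_\e x)}{\int_\S \psi dv_g}r_\e^2 - h(z(x_\e)+r_\e x)e^{\v_\e(x)}\big)$. Since $r_\e \to 0$, the first term vanishes locally uniformly; since $h$ is smooth and $x_\e \to p$, we have $h(z(x_\e)+r_\e x) \to h(p)$ locally uniformly. Using the upper bound on $\v_\e$ together with the bound on the total mass $\int_{\mathbb{B}_R(0)} h(z(x_\e)+r_\e x)e^{\v_\e}dx \le \lam_\e^{-1}\int_\S h e^{u_\e}dv_g = 1$ (after the change of variables $x \mapsto z(x_\e)+r_\e x$ and using $dv_g \approx r_\e^2 dx$), I can show $\{f_\e\}$ is bounded in $L^\infty_{loc}$. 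Then elliptic $L^p$ estimates applied to $\la_g \v_\e = f_\e$ give uniform $W^{2,p}_{loc}$ bounds for every $p$, hence by Sobolev embedding uniform $C^{1,\alpha}_{loc}$ bounds. This yields a subsequence converging in $C^1_{loc}(\mathbb{R}^2)$ to some limit $\v_0$ solving $\la_{\mathbb{R}^2}\v_0 = -8\pi h(p)e^{\v_0}$ on $\mathbb{R}^2$, with $\v_0(0) = 0$, $\v_0 \le 0$, and $\int_{\mathbb{R}^2} h(p)e^{\v_0}dx \le 1$.

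Finally I would identify the limit. The semilinear Liouville equation $\la \v_0 = -8\pi h(p) e^{\v_0}$ with finite total mass $\int_{\mathbb{R}^2} e^{\v_0} < \infty$ has, by the classification theorem of Chen-Li (and used in \cite{DJLW97}), only the radially symmetric solutions of the form $\v_0(x) = -2\log(1 + \pi h(p)|x-x_0|^2) + \text{const}$; the normalization $\v_0(0)=0$ and $\max \v_0 = \v_0(0)$ force $x_0 = 0$ and the constant to vanish, giving exactly $\v_0(x) = -2\log(1+\pi h(p)|x|^2)$. A direct check confirms this function realizes the total mass $\int_{\mathbb{R}^2} h(p) e^{\v_0}dx = 1$, which is consistent with the mass constraint being saturated.

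The main obstacle I anticipate is the uniform boundedness of $\v_\e(0)$, equivalently of $f_\e$, on fixed balls $\mathbb{B}_R(0)$: the rescaling was designed to make the nonlinear term $O(1)$, but turning this into a genuine $\e$-independent $L^\infty$ bound requires carefully tracking the total-mass normalization $\int_\S \lam_\e^{-1}h e^{u_\e}dv_g = 1$ under the change of variables and ruling out concentration that would spoil the local estimate. Once the uniform local bound is in hand, elliptic regularity and the Liouville classification make the rest routine.
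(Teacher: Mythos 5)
Your overall strategy (uniform local bound, elliptic regularity, passage to the limiting Liouville equation, Chen--Li classification) is the same as the paper's, and your identification of the limit at the end is fine. But there is a genuine gap at the first, decisive step. Note first that the additive shift in the definition of $\v_\e$ must be $-c_\e$ (the $-\lam_\e$ in the displayed definition is a typo): only then do we have $\v_\e(0)=0$, $\v_\e\leq 0$, and $r_\e^2\lam_\e^{-1}he^{u_\e}=he^{\v_\e}$, so that $f_\e$ is bounded in $L^\infty(\mathbb{B}_R(0))$ \emph{immediately} from $\v_\e\leq0$. Thus the ``main obstacle'' you single out --- boundedness of $\v_\e(0)$ and of $f_\e$ --- is not an obstacle at all. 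The real issue, which your proposal never addresses, is the \emph{lower} bound: $\v_\e\leq0$ together with $\v_\e(0)=0$ does not prevent $\v_\e$ from tending to $-\infty$ at other points of $\mathbb{B}_R(0)$, and without a uniform $L^\infty$ (or at least $L^p$) bound on $\v_\e$ itself the interior estimate you invoke for $\la \v_\e=f_\e$ does not close, since it has the form $\|\v_\e\|_{W^{2,p}(\mathbb{B}_{R/2}(0))}\leq C\bigl(\|f_\e\|_{L^p(\mathbb{B}_R(0))}+\|\v_\e\|_{L^p(\mathbb{B}_R(0))}\bigr)$.

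The paper supplies exactly this missing piece by a decomposition-plus-Harnack argument: write $\v_\e=\v_\e^1+\v_\e^2$ with $\la\v_\e^1=f_\e$ in $\mathbb{B}_R(0)$ and $\v_\e^1=0$ on $\p\mathbb{B}_R(0)$, so that $\|\v_\e^1\|_{L^\infty}\leq C$ by elliptic estimates and the $L^\infty$ bound on $f_\e$; then $\v_\e^2$ is harmonic, bounded above (since $\v_\e\leq0$ and $\v_\e^1\geq-C$), and satisfies $\v_\e^2(0)\geq-C$, so the Harnack inequality applied to the nonnegative harmonic function $C-\v_\e^2$ yields $|\v_\e^2|\leq C$ on $\mathbb{B}_{R/2}(0)$, hence the two-sided bound on $\v_\e$ there. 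You would need to insert this step (or an equivalent device, e.g.\ the Brezis--Merle estimates) before your elliptic bootstrap and limiting argument can proceed; once it is in place, the rest of your proposal goes through as in the paper.
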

\begin{proof}
Since $f_\e\in L^{\infty}(\mathbb{B}_R(0))$, by Theorem 9.15 in \cite{GT} we can consider the equation
\begin{align*}
\begin{cases}
      & \la_{g}\varphi_{\e}^{1}(x)=f_\e(x),~~ x\in \mathbb{B}_{R}(0), \\
      & \varphi_{\e}^{1}|_{\partial \mathbb{B}_{R}(0)}=0.
\end{cases}
\end{align*}
Let $\varphi_{\e}^{2}=\varphi_{\e}-\varphi_{\e}^{1}$, then $\la_{g}\varphi_{\e}^{2}=0$. Since $\v_\e\leq0$, $f_\e$ is bounded in $\mathbb{B}_R(0)$.
 The elliptic estimates together
with $W_0^{2,p}(\mathbb{B}_{R}(0))\hookrightarrow C(\overline{\mathbb{B}_{R}(0)})$ give $\sup_{\mathbb{B}_{R}(0)}|\varphi_{\e}^{1}|\leq C$. Because
$\varphi_{\e}\leqslant0$, we have $\sup_{\mathbb{B}_{R}(0)}\varphi_{\e}^{2}\leqslant C$. The Harnack inequality yields that
$\sup_{\mathbb{B}_{\frac{R}{2}}(0)}|\varphi_{\e}^{2}|\leqslant C$, because $\varphi_{\e}^{2}(0)$ is bounded. Therefore $\sup_{\mathbb{B}_{\frac{R}{2}}(0)}|\varphi_{\e}|\leqslant C$.

By the elliptic estimates, we can show that $\varphi_{\e}(x)\ra\varphi_{0}(x) \, \, in \,\,C^{1}_{loc}(\mathbb{R}^2)$ as $\e\ra0$, where $\varphi_{0}(x)$ satisfies
\begin{align*}
\begin{cases}
      & \la_{\mathbb{R}^2}\varphi_{0}=-8\pi h(p)e^{\varphi_{0}}, \\
      & \varphi_{0}(0)=0, \\
      & \int_{\mathbb{R}^{2}}h(p)e^{\varphi_{0}}dx\leq1.
\end{cases}
\end{align*}
By Chen-Li's classification theorem \cite{CL} we know
\begin{align*}
\v_{0}(x)=-2\log\left(1+\pi h(p)|x|^2\right).
\end{align*}
This is the end of the proof of Lemma \ref{bubble}.
\end{proof}

Away from the blowup point $p$, we have
\begin{lemma}\label{lemma-u-to-g}
For any $\O\subset\subset\S\setminus\{p\}$, we have $||u_\e||_{L^{\infty}(\O)}\leq C$.
\end{lemma}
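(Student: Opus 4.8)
The plan is to show that the probability density $\rho_\e:=\lam_\e^{-1}he^{u_\e}$ concentrates all of its mass at the blow-up point $p$, and then to run a local Brezis--Merle argument followed by an elliptic bootstrap on sets away from $p$. First I would prove the concentration. Since $\int_\S\rho_\e\,dv_g=1$, it is enough to check that the bubble already carries the full mass. Using Lemma~\ref{bubble}, the change of variables $x\mapsto z(x_\e)+r_\e x$, and the identity $r_\e^2e^{c_\e}=\lam_\e$, one finds
\begin{align*}
\lim_{\e\ra0}\int_{B_{Rr_\e}(x_\e)}\rho_\e\,dv_g=\int_{B_R(0)}h(p)e^{\v_0}\,dx,
\end{align*}
and letting $R\ra\infty$ the right-hand side tends to $\int_{\mathbb{R}^2}h(p)(1+\pi h(p)|x|^2)^{-2}\,dx=1$. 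As the total mass equals $1$ and $x_\e\ra p$, $r_\e\ra0$, this forces $\int_{\S\setminus B_\d(p)}\rho_\e\,dv_g\ra0$ for every fixed $\d>0$.

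Next, fix $\O\subset\subset\S\setminus\{p\}$ and cover it by finitely many small coordinate balls; it suffices to bound $u_\e$ on one such ball. Around a point $q\in\S\setminus\{p\}$ I would work in an isothermal chart on $B=B_{2R}(q)\subset\S\setminus\{p\}$, writing $g=e^{2\phi}(dx_1^2+dx_2^2)$ so that $\la_g=e^{-2\phi}\la$, and decompose $u_\e=\eta_\e+w_\e+s_\e$ on $B$. Here $s_\e$ solves the Dirichlet problem with the bounded right-hand side $e^{2\phi}\,8\pi(1-\e)\frac{\psi}{\int_\S\psi\,dv_g}$, so $\|s_\e\|_{L^\infty(B)}\le C$ by elliptic estimates; $w_\e$ solves $-\la w_\e=e^{2\phi}\,8\pi(1-\e)\rho_\e$ with zero boundary value; and $\eta_\e=u_\e-w_\e-s_\e$ satisfies $\la\eta_\e=0$ on $B$. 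Since $\rho_\e\ge0$, the maximum principle gives $w_\e\ge0$, and since $\|{-}\la w_\e\|_{L^1(B)}=8\pi(1-\e)\int_B\rho_\e\,dv_g\ra0$, the Brezis--Merle inequality yields $\int_Be^{tw_\e}\,dx\le C$ for each fixed $t$ once $\e$ is small. Finally $\|\eta_\e\|_{L^1(B)}\le C$, because $\|u_\e\|_{L^1(\S)}\le C$ by Lemma~\ref{lemma-l_q-norm} and the Poincar\'e inequality (Lemma~\ref{lemma-poincare-ineq}); interior estimates for harmonic functions then give $\|\eta_\e\|_{L^\infty(B_R(q))}\le C$.

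The lower bound is now immediate: on $B_R(q)$ we have $u_\e=\eta_\e+w_\e+s_\e\ge-C$ since $w_\e\ge0$. For the upper bound I would bootstrap. From $u_\e\le C+w_\e$ and the Brezis--Merle bound we get $\int_{B_R(q)}e^{tu_\e}\,dx\le C$ for every $t$, hence $\rho_\e=\lam_\e^{-1}he^{u_\e}$ is bounded in $L^t(B_R(q))$ for every $t$ (this is where $\liminf_{\e\ra0}\lam_\e>0$ from Lemma~\ref{l>0} enters). Thus the whole right-hand side of (\ref{E-L_equation}) is bounded in $L^t(B_R(q))$, and together with the bound on $\|u_\e\|_{L^t(B_R(q))}$ the interior $L^t$ estimate gives $\|u_\e\|_{W^{2,t}(B_{R/2}(q))}\le C$. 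Choosing $t>1$ and using $W^{2,t}\hookrightarrow C^0$ in dimension two yields $\|u_\e\|_{L^\infty(B_{R/2}(q))}\le C$, and covering $\O$ completes the argument.

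The main obstacle is the upper bound, that is, excluding a secondary concentration of $u_\e$ on $\O$. This is precisely where the smallness of the mass $\int_{\S\setminus B_\d(p)}\rho_\e\,dv_g$ is used, via the Brezis--Merle inequality, so the concentration established in the first step is indispensable; the lower bound, by contrast, comes essentially for free once the harmonic part $\eta_\e$ is controlled.
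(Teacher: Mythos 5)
Your proposal is correct and follows essentially the same route as the paper: first show that the mass $\lam_\e^{-1}he^{u_\e}$ concentrates entirely in the bubble at $p$ (so the local mass on any $\O_2\subset\subset\S\setminus\{p\}$ tends to zero), then apply the Brezis--Merle estimate to the Dirichlet part carrying that vanishing mass, control the remainder (with the bounded $\psi$-term) by interior elliptic estimates using the $L^q$ gradient bound of Lemma \ref{lemma-l_q-norm}, and bootstrap the resulting local exponential integrability of $u_\e$ through the equation to an $L^\infty$ bound. The only differences are cosmetic (a three-term rather than two-term decomposition, and pushing Brezis--Merle to arbitrary exponents rather than some $q\in(1,2)$).
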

\begin{proof}
 Let $\O\subset\subset \S\setminus\{p\}$. We choose another two compact sets $\O_1$
and $\O_2$ in $\S\setminus\{p\}$ such that $\O\subset\subset\O_1\subset\subset\O_2\subset\subset\S\setminus\{p\}$.
Calculating directly, one knows in Lemma \ref{bubble}
\begin{align}\label{in-is-1}
\lim_{R\ra+\infty}\lim_{\e\ra0}\lam_\e^{-1}\int_{B_{Rr_\e}(x_\e)} he^{u_\e} dv_g = \int_{\mathbb{R}^2} h(p)e^{\varphi_0}dx=1.
\end{align}
Then by (\ref{in-is-1}) one has
\begin{align}\label{out-is-0}
\lim_{R\ra+\infty}\lim_{\e\ra0}\lam_\e^{-1}\int_{\S\setminus B_{Rr_\e}(x_\e)} he^{u_\e} dv_g = 0.
\end{align}
So we have
\begin{equation}
\label{O_2}
\lim_{\e\ra0}\lam_\e^{-1}\int_{\O_2} he^{u_\e}dv_g=0.
\end{equation}
Assume $u_\e^1$ be the unique solution of
\begin{equation}
\label{u_e-1}
\begin{cases}
      & \la_g u_\e^1=-8\pi(1-\e)\lam_\e^{-1}he^{u_\e}  \text{~~in~~$\O_2$}, \\
      & u_\e^1=0  \text{~~on~~$\p \O_2$}.
\end{cases}
\end{equation}
From (\ref{O_2}) and Theorem 1 in \cite{BM91} one knows that for some $q\in(1,2)$, we have
\begin{equation}
\label{e-u-1}
\int_{\O_2}e^{q|u_\e^1|} dv_g\leq C.
\end{equation}
It follows that in (\ref{u_e-1}) one has
$$||u_\e^1||_{L^q(\O_2)}\leq C.$$

Let $u_\e^2=u_\e-u_\e^1$. Then $\la_g u_\e^2=8\pi(1-\e) \psi /\int_\S \psi dv_g$ in $\O_2$. It follows from the interior $L^p$-estimates
 (c.f. Theorem 8.17 in \cite{GT}) and Lemma \ref{lemma-l_q-norm}  that
\begin{align}
\label{u_e-2}
||u_\e^2||_{L^{\infty}(\O_1)}\leq& C||u_\e^2||_{L^q(\O_2)}\nonumber\\
\leq& C\le(||u_\e||_{L^q(\O_2)}+||u_\e^1||_{L^q(\O_2)}\ri)\nonumber\\
\leq& C\le(||\na_g u_\e||_{L^q(\S)}+||u_\e^1||_{L^q(\O_2)}\ri)\nonumber\\
\leq& C.
\end{align}
By combining (\ref{e-u-1}) and (\ref{u_e-2}) we have
\begin{align*}
\label{}
\int_{\O_1}e^{pu_\e}dv_g=\int_{\O_1} e^{pu_\e^1} e^{pu_\e^2} dv_g\leq C.
\end{align*}
Using the standard elliptic estimates to equation (\ref{u_e-1}), one obtains
\begin{align*}
\label{}
||u_\e^1||_{L^{\infty}(\O)}\leq C.
\end{align*}
Thus,
\begin{equation}\label{out-infty-norm}
||u_\e||_{L^{\infty}(\O)}\leq C.
\end{equation}
This ends the proof of Lemma \ref{lemma-u-to-g}.
\end{proof}
\begin{rmk}
In fact, one can show that $u_\e\ra G_p$ in $C^{1}_{loc}(\S\setminus \{p\})$ as $\e\ra0$, like
in \cite{DJLW97}. However, we do not need this fact, so we omit it here.
\end{rmk}

To complete the proof of Theorem \ref{thm-m-t}, we still need a lower bound for $u_\e$ away from the maximum point
$x_\e$.

 Similar to \cite{DJLW97}, we have the following lemma by the maximum principle.
\begin{lemma}\label{m-p}
For any fixed $R>0$, let $r_\e=\sqrt{\lam_\e}e^{-c_\e/2}$. Then for any $y\in \S\setminus B_{Rr_\e}(x_\e)$, we have
$$u_\e(y)-G_{x_\e}(y)\geq -c_\e+2\log\lam_\e
-2\log\pi-2\log h(p)-A_p+o_\e(1)+o_R(1),$$
where $o_\e(1)\ra0$ as $\e\ra0$ and $o_R(1)\ra0$ as $R\ra+\infty$.
\end{lemma}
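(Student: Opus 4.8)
The plan is to compare $u_\e$ with the Green function $G_{x_\e}$ on the exterior region $\S\setminus B_{Rr_\e}(x_\e)$ and to propagate an estimate on $\p B_{Rr_\e}(x_\e)$ inward by the maximum principle, in the spirit of \cite{DJLW97}. Set $w_\e=u_\e-G_{x_\e}$. Using (\ref{E-L_equation}) and (\ref{green-function}), on $\S\setminus B_{Rr_\e}(x_\e)$ (where the Dirac mass of $G_{x_\e}$ is absent) one computes
\[
\la_g w_\e=-8\pi\e\,\f{\psi}{\int_\S\psi\,dv_g}-8\pi(1-\e)\,\f{he^{u_\e}}{\lam_\e}.
\]
The second term is nonpositive since $h\geq0$ and $\lam_\e>0$, while the first is only $O(\e)$ but of indefinite sign.

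First I would establish the boundary estimate on $\p B_{Rr_\e}(x_\e)$, where $r(\cdot,x_\e)=Rr_\e$. On the one hand, Lemma \ref{bubble} gives, uniformly on the sphere $|x|=R$,
\[
u_\e(z(x_\e)+r_\e x)=c_\e+\v_0(x)+o_\e(1)=c_\e-2\log\!\left(1+\pi h(p)R^2\right)+o_\e(1);
\]
on the other hand, the expansion (\ref{green-function-expression}) together with $r_\e=\sqrt{\lam_\e}\,e^{-c_\e/2}$ and $A_{x_\e}=A_p+o_\e(1)$ yields
\[
G_{x_\e}=-4\log(Rr_\e)+A_p+o_\e(1)=-4\log R-2\log\lam_\e+2c_\e+A_p+o_\e(1).
\]
Subtracting and using the elementary identity $4\log R-2\log(1+\pi h(p)R^2)=-2\log\pi-2\log h(p)+o_R(1)$, I obtain on $\p B_{Rr_\e}(x_\e)$
\[
w_\e=-c_\e+2\log\lam_\e-2\log\pi-2\log h(p)-A_p+o_\e(1)+o_R(1),
\]
which is precisely the asserted lower bound.

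To pass from the boundary to the whole exterior I would first neutralize the indefinite $O(\e)$ term. Let $\xi_\e$ solve $\la_g\xi_\e=-8\pi\e\,\psi/\int_\S\psi\,dv_g$ in $\S\setminus B_{Rr_\e}(x_\e)$ with $\xi_\e=0$ on $\p B_{Rr_\e}(x_\e)$; since the right-hand side is bounded and $O(\e)$, elliptic estimates give $\|\xi_\e\|_{L^\infty}=o_\e(1)$. Then $\la_g(w_\e-\xi_\e)=-8\pi(1-\e)he^{u_\e}/\lam_\e\leq0$, so $w_\e-\xi_\e$ is superharmonic on the compact manifold-with-boundary $\S\setminus B_{Rr_\e}(x_\e)$ and attains its minimum on $\p B_{Rr_\e}(x_\e)$. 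Hence for every $y$ in the exterior, $w_\e(y)\geq\min_{\p B_{Rr_\e}(x_\e)}w_\e+o_\e(1)$, and combining this with the boundary estimate gives the claim.

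The main obstacle is the treatment of the $\psi$-term: because $\psi$ is a general function of indefinite sign, $w_\e$ itself is not superharmonic, so one must absorb this contribution through the auxiliary function $\xi_\e$ (or estimate it directly) while verifying it costs only $o_\e(1)$; this is the point where the generalization beyond $\psi\equiv1$ enters. A secondary technical point is to check that the $C^1_{loc}$ convergence of $\v_\e$ in Lemma \ref{bubble} and the $O(r)$ remainder in (\ref{green-function-expression}) are genuinely uniform on $\p B_{Rr_\e}(x_\e)$ with errors $o_\e(1)$ for fixed $R$, so that $w_\e$ is constant on the boundary up to such errors and its boundary minimum equals the stated quantity.
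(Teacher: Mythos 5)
Your overall strategy coincides with the paper's: match $u_\e$ against $G_{x_\e}$ on $\p B_{Rr_\e}(x_\e)$ using Lemma \ref{bubble} and the expansion (\ref{green-function-expression}), then propagate the boundary estimate by the maximum principle after correcting for the indefinite term $-8\pi\e\,\psi/\int_\S\psi\,dv_g$. The boundary computation and the superharmonicity argument are fine. The gap is in the corrector $\xi_\e$. You solve $\la_g\xi_\e=-8\pi\e\,\psi/\int_\S\psi\,dv_g$ on $\S\setminus B_{Rr_\e}(x_\e)$ with $\xi_\e=0$ on $\p B_{Rr_\e}(x_\e)$ and assert $\|\xi_\e\|_{L^\infty}=o_\e(1)$ "by elliptic estimates since the right-hand side is $O(\e)$." This does not follow: the constant in the Dirichlet $L^\infty$ estimate on $\S\setminus B_\rho$ degenerates as $\rho\ra0$, because the source has nonzero total mass. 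Indeed $\int_{\S\setminus B_{Rr_\e}}\la_g\xi_\e\,dv_g=-8\pi\e(1+o(1))$, so by the divergence theorem a flux of order $\e$ must pass through a circle of radius $Rr_\e$, forcing $\xi_\e$ to behave like $-4\e\log\bigl(r/(Rr_\e)\bigr)$ away from the hole; at distance $O(1)$ from $x_\e$ this is of size $\e\log(1/r_\e)\sim \e c_\e/2$. At this stage of the argument nothing controls $\e c_\e$ (it is never shown to tend to $0$; in (\ref{lower}) the term $4\pi\e c_\e$ is simply kept with a favorable sign), so your corrector may fail to be $o_\e(1)$ and the proof breaks.

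The paper avoids this by a trick of Chen--Zhu: replace $\psi/\int_\S\psi\,dv_g$ by a truncation $\psi'$ with $\int_\S\psi'\,dv_g=0$ and $\psi'\leq \psi/\int_\S\psi\,dv_g$ pointwise, solve $\la_g\zeta=\psi'$ \emph{globally on the closed surface} $\S$ (possible precisely because $\psi'$ has zero mean), so that $\zeta$ is a fixed bounded function independent of $\e$, and then apply the maximum principle to $u_\e-G_{x_\e}+8\pi\e\zeta$, whose Laplacian is $-8\pi\e(\psi/\int_\S\psi\,dv_g-\psi')-8\pi(1-\e)\lam_\e^{-1}he^{u_\e}\leq0$. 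The correction $8\pi\e\zeta$ is then genuinely $O(\e)=o_\e(1)$. You should replace your $\xi_\e$ by such a globally defined, $\e$-independent $\zeta$ (or otherwise remove the mean of the source before solving); with that change the rest of your argument goes through.
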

\begin{proof}
By (\ref{E-L_equation}) and (\ref{green-function}), we have
 for any $y\in \S\setminus B_{Rr_\e}(x_\e)$
$$\la_g\le(u_\e-G_{x_\e}\ri)=-8\pi\e\f{\psi}{\int_\S \psi dv_g}-8\pi(1-\e)\lam_\e^{-1}he^{u_\e}.$$
Notice that one needs to deal with the term $-8\pi\e\psi/\int_\S \psi dv_g$ which sign is unknown. Employing the trick introduced by Chen-Zhu (c.f. Lemma 3.5 in \cite{CZ13}), we define
\begin{align*}
\psi'=
\begin{cases}
\f{\psi}{\int_\S \psi dv_g}, ~~~~\text{if}~~\f{\psi}{\int_\S \psi dv_g}\leq0,\\
\phi\f{\psi}{\int_\S \psi dv_g},~~\text{if}~~\f{\psi}{\int_\S \psi dv_g}>0,
\end{cases}
\end{align*}
where $\phi(x)\in[0,1]$ is a measurable function on $\S$ such that $\int_\S \psi'dv_g=0$. Assume $\la_g \zeta=\psi'$ on $\S$. Then $\zeta$
is bounded on $\S$. Consider the function $u_\e-G_{x_\e}+8\pi\e\zeta$, we have
$$\la_g\le(u_\e-G_{x_\e}+8\pi\e\zeta\ri)=-8\pi\e\left(\f{\psi}{\int_\S \psi dv_g}-\psi'\right)-8\pi(1-\e)\lam_\e^{-1}he^{u_\e}\leq0,~~y\in\S\setminus B_{Rr_\e}(x_\e).$$
Lemma \ref{bubble} together with (\ref{green-function-expression}) tells us that
$$\le(u_\e-G_{x_\e}+8\pi\e\zeta\ri)\le|_{\p B_{Rr_\e}(x_\e)}\ri.=-c_\epsilon+2\log\lambda_\epsilon
-2\log\pi-2\log h(p)-A_{x_\epsilon}+o_\epsilon(1)+o_R(1).$$
Then by the maximum principle we know
\begin{align*}
u_\e-G_{x_\e}+8\pi\e\zeta\geq -c_\epsilon+2\log\lambda_\epsilon
-2\log\pi-2\log h(p)-A_{x_\epsilon}+o_\epsilon(1)+o_R(1).
\end{align*}
Since $\zeta$ is bounded and $A_{x_\e}\ra A_p$ as $\e\ra0$, we have
\begin{align*}
u_\e-G_{x_\e}\geq -c_\epsilon+2\log\lambda_\epsilon
-2\log\pi-2\log h(p)-A_{p}+o_\epsilon(1)+o_R(1).
\end{align*}
This ends the proof of the lemma.
\end{proof}

We are now ready to estimate $J^{\psi,h}_\e(u_\e)$ from below and give the proof of Theorem \ref{thm-m-t}.\\

\underline{\textit{Proof of Theorem \ref{thm-m-t}.}} Recall that $r_\e=\sqrt{\lam_\e}e^{-c_\e/2}$, for any fixed $R>0$, we have
\begin{align}\label{energy}
\int_\S |\na_g u_\e|^2 dv_g=\int_{\S\setminus B_{Rr_\e}(x_\e)} |\na_g u_\e|^2dv_g+\int_{B_{Rr_\e}(x_\e)}|\na_g u_\e|^2dv_g.
\end{align}
From Lemma \ref{bubble} one knows
\begin{align}\label{energy-in}
\int_{B_{Rr_\e}(x_\e)}|\na_g u_\e|^2dv_g
=&\int_{B_R(0)}|\na_{\mathbb{R}^2} \varphi_0|^2dx+o_\e(1)\nonumber\\
=&16\pi\log\le(1+\pi h(p)R^2\ri)-16\pi+o_\e(1)+o_R(1).
\end{align}
By the equation of $u_\e$ (c.f. (\ref{E-L_equation})) we have
\begin{align}
\label{out}
\int_{\S\setminus{B_{Rr_\e}(x_\e)}}|\na_g u_\e|^2dv_g
=&-\int_{\S\setminus{B_{Rr_\e}(x_\e)}}u_\e \la_g u_\e dv_g-\int_{\p B_{Rr_\e}(x_\e)}u_\e\f{\p u_\e}{\p n}ds_g\nonumber\\
=&8\pi(1-\e)\lam_\e^{-1}\int_{\S\setminus{B_{Rr_\e}(x_\e)}} u_\e he^{u_\e} dv_g\nonumber\\
  &-\frac{8\pi(1-\e)}{\int_\S \psi dv_g}\int_{\S\setminus{B_{Rr_\e}(x_\e)}}\psi u_\e dv_g-\int_{\p B_{Rr_\e}(x_\e)}u_\e\f{\p u_\e}{\p n}ds_g.
\end{align}
Lemma \ref{m-p} leads to
\begin{align}
\label{out0}
&8\pi(1-\e)\lam_\e^{-1}\int_{\S\setminus{B_{Rr_\e}(x_\e)}} u_\e he^{u_\e} dv_g\nonumber\\
\geq &8\pi(1-\e)\lam_\e^{-1}\int_{\S\setminus{B_{Rr_\e}(x_\e)}}G_{x_\e} he^{u_\e}dv_g-8\pi(1-\e)\lam_\e^{-1}\left(c_\e-2\log\lam_\e\right)\int_{\S\setminus{B_{Rr_\e}(x_\e)}}  he^{u_\e}dv_g\nonumber\\
&+ 8\pi(1-\e)\lam_\e^{-1}\int_{\S\setminus{B_{Rr_\e}(x_\e)}}\le(-2\log\pi-2\log h(p)-A_{p}+o_\epsilon(1)+o_R(1)\ri) he^{u_\e} dv_g.
\end{align}
From (\ref{E-L_equation}) and (\ref{green-function}) one has
\begin{align}
\label{out1}
&8\pi(1-\e)\lam_\e^{-1}\int_{\S\setminus{B_{Rr_\e}(x_\e)}}G_{x_\e} he^{u_\e} dv_g\nonumber\\
 =&\int_{\S\setminus{B_{Rr_\e}(x_\e)}}G_{x_\e}\le(-\la_gu_\e + 8\pi(1-\e)\frac{\psi}{\int_\S \psi dv_g}\ri)dv_g\nonumber\\
 =&\frac{8\pi}{\int_\S \psi dv_g}\int_{B_{Rr_\e}(x_\e)}u_\e\psi dv_g- \frac{8\pi(1-\e)}{\int_\S \psi dv_g} \int_{B_{Rr_\e}(x_\e)}G_{x_\e}\psi dv_g\nonumber\\
 &+\int_{\p B_{Rr_\e}(x_\e)}G_{x_\e}\f{\p u_\e}{\p n}ds_g-\int_{\p B_{Rr_\e}(x_\e)}u_\e \f{\p G_{x_\e}}{\p n}ds_g.
\end{align}
It also follows from  (\ref{E-L_equation}) that
\begin{align}
\label{out2}
&-8\pi(1-\e)\lam_\e^{-1}\left(c_\e-2\log\lam_\e\right)\int_{\S\setminus{B_{Rr_\e}(x_\e)}} he^{u_\e} dv_g\nonumber\\
=&\left(c_\e-2\log\lam_\e\right) \int_{\p B_{Rr_\e}(x_\e)}\f{\p u_\e}{\p n}ds_g-\left(c_\e-2\log\lam_\e\right)\frac{8\pi(1-\e)}{\int_\S \psi dv_g}\int_{\S\setminus B_{Rr_\e}(x_\e)}\psi dv_g.
\end{align}
It follows from (\ref{out-is-0}) that
\begin{align}
\label{out3}
 8\pi(1-\e)\lam_\e^{-1}\int_{\S\setminus{B_{Rr_\e}(x_\e)}}\le(-2\log\pi-2\log h(p)-A_{p}+o_\epsilon(1)+o_R(1)\ri) he^{u_\e}dv_g=o_\e(1).
\end{align}
Inserting (\ref{out1})-(\ref{out3}) into (\ref{out0}) one obtains that
\begin{align}\label{out-first}
&8\pi(1-\e)\lam_\e^{-1}\int_{\S\setminus{B_{Rr_\e}(x_\e)}} u_\e he^{u_\e} dv_g\nonumber\\
\geq&\frac{8\pi}{\int_\S \psi dv_g}\int_{B_{Rr_\e}(x_\e)}u_\e\psi dv_g- \frac{8\pi(1-\e)}{\int_\S \psi dv_g} \int_{B_{Rr_\e}(x_\e)}G_{x_\e}\psi dv_g\nonumber\\
 &+\int_{\p B_{Rr_\e}(x_\e)}G_{x_\e}\f{\p u_\e}{\p n}ds_g-\int_{\p B_{Rr_\e}(x_\e)}u_\e \f{\p G_{x_\e}}{\p n}ds_g\nonumber\\
&+\left(c_\e-2\log\lam_\e\right) \int_{\p B_{Rr_\e}(x_\e)}\f{\p u_\e}{\p n}ds_g\nonumber\\
&-\left(c_\e-2\log\lam_\e\right)\frac{8\pi(1-\e)}{\int_\S \psi dv_g}\int_{\S\setminus B_{Rr_\e}(x_\e)}\psi dv_g+o_\e(1).
\end{align}
Substituting (\ref{out-first}) into (\ref{out}) we have
\begin{align}
\label{out20}
\int_{\S\setminus{B_{Rr_\e}(x_\e)}}|\na_g u_\e|^2dv_g
\geq&\frac{8\pi}{\int_\S \psi dv_g}\int_{B_{Rr_\e}(x_\e)}u_\e\psi dv_g- \frac{8\pi(1-\e)}{\int_\S \psi dv_g} \int_{B_{Rr_\e}(x_\e)}G_{x_\e}\psi dv_g\nonumber\\
 &+\int_{\p B_{Rr_\e}(x_\e)}G_{x_\e}\f{\p u_\e}{\p n}ds_g-\int_{\p B_{Rr_\e}(x_\e)}u_\e \f{\p G_{x_\e}}{\p n}ds_g\nonumber\\
&+\left(c_\e-2\log\lam_\e\right) \int_{\p B_{Rr_\e}(x_\e)}\f{\p u_\e}{\p n}ds_g\nonumber\\
&-\left(c_\e-2\log\lam_\e\right)\frac{8\pi(1-\e)}{\int_\S \psi dv_g}\int_{\S\setminus B_{Rr_\e}(x_\e)}\psi dv_g\nonumber\\
&+\frac{8\pi(1-\e)}{\int_\S \psi dv_g}\int_{ B_{Rr_\e}(x_\e) }\psi u_\e dv_g-\int_{\p B_{Rr_\e}(x_\e)}u_\e\f{\p u_\e}{\p n}ds_g+o_\e(1).
\end{align}
Lemma \ref{bubble} tells us that
\begin{align}\label{out20_1}
\int_{B_{Rr_\e}(x_\e)} u_\e \psi dv_g=o_\e(1).
\end{align}
From (\ref{green-function-expression}) one knows
\begin{align}
\label{out20_2}
\int_{B_{Rr_\e}(x_\e)}G_{x_\e}\psi dv_g=o_\e(1).
\end{align}
Using Lemmas \ref{bubble} and  \ref{m-p}, one has
\begin{align}
\label{out20_3}
&-\int_{\p B_{Rr_\e}(x_\e)}\f{\p u_\e}{\p n}\le(u_\e-G_{x_\e}+c_\e-2\log\lam_\e\ri)ds_g\nonumber\\
\geq& \f{8\pi^2h(p)R^2}{1+\pi h(p)R^2}\le(
-2\log\pi-2\log h(p)-A_{x_\epsilon}\ri)+o_\e(1)+o_R(1)\nonumber\\
=&-16\pi\log\pi-16\pi\log h(p)-8\pi A_p+o_\e(1)+o_R(1).
\end{align}
In view of (\ref{bubble-function}) and (\ref{green-function-expression}) we have
\begin{align}
\label{out20_4}
-\int_{\p B_{Rr_\e}(x_\e)}u_\e \f{\p G_{x_\e}}{\p n}ds_g
=&-\le(c_\e-2\log(1+\pi h(p)R^2)+o_\e(1)\ri)\le(-8\pi+O(Rr_\e)\ri)\nonumber\\
=&8\pi c_\e-16\pi\log(1+\pi h(p)R^2)+o_\e(1)+o_R(1).
\end{align}
It is clear that by Lemmas \ref{l>0} and \ref{lemma-c}
\begin{align}\label{out20_5}
-\left(c_\e-2\log\lam_\e\right)\frac{8\pi(1-\e)}{\int_\S \psi dv_g}\int_{\S\setminus B_{Rr_\e}(x_\e)}\psi dv_g
=-8\pi(1-\e)\left(c_\e-2\log\lam_\e\right)+o_\e(1).
\end{align}
Therefore, by inserting (\ref{out20_1})-(\ref{out20_5}) into (\ref{out20}) we obtain
\begin{align}\label{out-last}
\int_{\S\setminus B_{Rr_\e}(x_\e)}|\na_g u_\e|^2 dv_g
\geq& -16\pi\log\pi-16\pi\log h(p)-8\pi A_p\nonumber\\
&+8\pi\e c_\e-16\pi\log(1+\pi h(p)R^2)\nonumber\\
&+16\pi(1-\e)\log\lam_\e+o_\e(1)+o_R(1).
\end{align}
Substituting (\ref{energy-in}) and (\ref{out-last}) into (\ref{energy}) one has
\begin{align}\label{lower}
J^{\psi,h}_\e(u_\e)=&\f{1}{2}\int_\S |\na_g u_\e|^2dv_g - 8\pi(1-\e)\log\lam_\e\nonumber\\
\geq&-8\pi-8\pi\log\pi-8\pi\log h(p)-4\pi A_p\nonumber\\
&+4\pi\e c_\e+o_\e(1)+o_R(1)
\end{align}
Letting $\e\ra0$ first, and then $R\ra+\infty$ in (\ref{lower}), we have
\begin{align}\label{lower-bound}
\inf_{u\in W^{1,2}(\S, g)}J^{\psi,h}(u)=\lim_{\e\ra0}J^{\psi,h}_\e(u_\e)
&\geq-8\pi-8\pi\log\pi-8\pi\log h(p)-4\pi A_p\nonumber\\
&\geq-8\pi-8\pi\log\pi-4\pi\max_{y\in\S\setminus Z}\left(2\log h(y)+A_y\right).
\end{align}
Then Theorem \ref{thm-m-t} follows directly. $\hfill{\square}$

\section{Proof of Theorem \ref{existence}---Part I: $h>0$ }

In this section, we shall construct a blowup sequence $\{\phi_\e\}_{\e>0}$ with
$$J^{\psi,h}(\phi_\e)<-8\pi-8\pi\log\pi-4\pi\max_{y\in \S}\le(2\log h(y)+A_y\ri)$$
for sufficiently small $\e$. This is a contradiction with (\ref{lower-bound}), so no blowup happens, then we are in the position of Case a)
and the proof terminates. In fact, our proof also shows that, if $J^{\psi,h}$ has no minimizer in $W^{1,2}(\S,g)$, then
\begin{align*}\inf_{u\in W^{1,2}(\S,g)}J^{\psi,h}(u)=-8\pi-8\pi\log\pi-4\pi\max_{y\in \S}\le(2\log h(y)+A_y\ri).
\end{align*}

\underline{\textit{Proof of Theorem \ref{existence}---Part I: $h>0$.}} Suppose that $2\log h(p)+A_p =\max_{y\in \S}\le(2\log h(y)+A_y\ri)$. Let $r=\text{dist}(x,p)$.
Denote
\begin{align*}
\beta(r,\theta)=&G_p-\le(-4\log r+A_p+b_1r\cos\theta+b_2r\sin\theta\ri)\nonumber\\
=&c_1r^2\cos^2\theta+2c_2r^2\cos\theta\sin\theta+c_3r^2\sin^2\theta+O(r^3).
\end{align*}
We define
\begin{align}\label{test-function}
\phi_\e=
\begin{cases}
-2\log\le(r^2+\e\ri)+b_1r\cos\theta+b_2r\sin\theta+\log \e,&~~r\leq \a\sqrt{\e},\\
\le(G_p -\eta\beta(r,\theta)\ri)+C_\e+\log \e,&~~\a\sqrt{\e}\leq r \leq 2\a\sqrt{\e},\\
G_p+C_\e+\log \e, &~~r\geq 2\a\sqrt{\e},
\end{cases}
\end{align}
where $\eta\in C_0^{1}\le(B_{2\a\sqrt{\e}}(p)\ri)$ is a cutoff function satisfying
$\eta\equiv1$ in $B_{\a\sqrt{\e}}(p)$ and $|\na_g \eta|\leq\f{C}{\a\sqrt{\e}}$,
$C_\e=-2\log\f{\a^2+1}{\a^2}-A_p$, $\a=\a(\e)$ satisfying $\a\ra\infty$ and $\a\sqrt{\e}\ra0$
as $\e\ra0$ will be determined later.

We denote by $(r,\theta)$ the chosen normal coordinate system around $p$. We
write $g = dr^2 + g^2(r,\theta)d\theta^2$. It is well-known that
\begin{align}\label{metric}
g(r,\theta)=r-\f{K_g(p)}{6}r^3+O(r^4).
\end{align}
Using (\ref{metric}) and calculating directly, we have
\begin{align}\label{phi_energy}
\int_\S |\na_g \phi_\e|^2 dv_g
=&16\pi\log\f{\a^2+1}{\a^2}-16\pi\log\e-16\pi+16\pi\f{1}{1+\a^2}\nonumber\\
&+8\pi A_p+\f{16}{3}\pi K(p)\e\log(\a^2+1)+O\le(\a^4\e^2\log(\a^2\e)\ri)
\end{align}
and
\begin{align}\label{phi_h-int}
\int_\S he^{\phi_\e} dv_g
=& \pi h(p)\f{\a^2}{\a^2+1}\le[1+\f{1}{\a^2+1}-\f{1}{6}K(p)\f{\a^2+1}{\a^2}\e\log\le(\a^2+1\ri)\ri.\nonumber\\
&+\f{1}{4}\f{a^2+1}{\a^2}\le(b_1^2+b_2^2\ri)\e\log\le(\a^2+1\ri)-\f{1}{4}\f{\a^2}{\a^2+1}\le(b_1^2+b_2^2\ri)\e\log\le(\a^2\e\ri)\nonumber\\
&-\f{1}{2}\f{\a^2}{\a^2+1}(c_1+c_3-\f{1}{3}K(p))\e\log\le(\a^2\e\ri)\nonumber\\
&+\f{1}{4}\f{\a^2+1}{\a^2}\f{\la_g h(p)}{h(p)}\e\log\le(\a^2+1\ri)-\f{1}{4}\f{\a^2}{\a^2+1}\f{\la_g h(p)}{h(p)}\e\log\le(\a^2\e\ri)\nonumber\\
&+\le.\f{1}{2}\f{\a^2+1}{\a^2}\f{k_1b_1+k_2b_2}{h(p)}\e\log\le(\a^2+1\ri)-\f{1}{2}\f{\a^2}{\a^2+1}\f{k_1b_1+k_2b_2}{h(p)}\e\log\le(\a^2\e\ri)\ri]\nonumber\\
&+O\le(\a^4\e^2\ri)+O(\e).
\end{align}
We refer the readers to pages 241 and 245 in \cite{DJLW97} for the details of calculation of (\ref{phi_energy}) and (\ref{phi_h-int}).

Suppose that
\begin{align*}
\psi(x)-\psi(p)=&l_1r\cos\theta+l_2r\sin\theta+l_3r^2\cos^2\theta+2l_4r^2\sin\theta\cos\theta+l_5r^2\sin^2\theta+O(r^3)
\end{align*}
in $B_\d(p)$ for a small $\d>0$.

Direct computations tell us that
\begin{align*}
\int_{B_{\a\sqrt{\e}}(p)}\le(\psi-\psi(p)\ri)\le(-2\log(r^2+\e)+b_1r\cos\theta+b_2r\sin\theta\ri) dv_g=O\le(\a^4\e^2\log(\a^2\e\ri)
\end{align*}
and
\begin{align*}
&\int_{B_{\a\sqrt{\e}}(p)}\le(-2\log(r^2+\e)+b_1r\cos\theta+b_2r\sin\theta\ri) dv_g\nonumber\\
=&-2\pi\a^2\e\log\le((\a^2+1)\e\ri)-2\pi\e\log\le(\a^2+1\ri)+2\pi\a^2\e+O\le(\a^4\e^2\log(\a^2\e\ri).
\end{align*}
So
\begin{align}\label{psi-phi-in}
\int_{B_{\a\sqrt{\e}}(p)}\psi\phi_\e dv_g
=&\psi(p)\le[-2\pi\a^2\e\log\le((\a^2+1)\e\ri)-2\pi\e\log\le(\a^2+1\ri)+2\pi\a^2\e\ri]\nonumber\\
  &+\log\e\int_{B_{\a\sqrt{\e}}(p)}\psi dv_g+O\le(\a^4\e^2\log(\a^2\e\ri).
\end{align}
We have
\begin{align}\label{psi-phi-out-0}
&\int_{\S\setminus B_{\a\sqrt{\e}}(p)}\psi \le((G_p-\eta\beta(r,\th))+C_\e+\log\e\ri)dv_g\nonumber\\
=&\int_{\S\setminus B_{\a\sqrt{\e}}(p)}\psi G_pdv_g-\int_{B_{2\a\sqrt{\e}}(p)\setminus B_{\a\sqrt{\e}}(p)}\psi\eta\beta(r,\th)dv_g
  +\le(C_\e+\log\e\ri)\int_{\S\setminus B_{\a\sqrt{\e}}(p)}\psi dv_g.
\end{align}
By a direct calculation, one has
\begin{align}\label{out_I}
\int_{\S\setminus B_{\a\sqrt{\e}}(p)}\psi G_pdv_g
=&-\int_{B_{\a\sqrt{\e}}(p)}\psi G_p dv_g\nonumber\\
=&-\int_{B_{\a\sqrt{\e}}(p)}\le(\psi-\psi(p)\ri) G_p dv_g-\int_{B_{\a\sqrt{\e}}(p)}\psi(p) G_p dv_g\nonumber\\
=&-\psi(p)\le(-2\pi\a^2\e\log\le(\a^2\e\ri)+2\pi\a^2\e+\pi A_p \a^2\e\ri)+O\le(\a^4\e^2\log(\a^2\e\ri)
\end{align}
where we have used $$\int_{B_{\a\sqrt{\e}}(p)}\psi dv_g=\pi\psi(p)\a^2\e+O\le(\a^4\e^2\ri).$$
Meanwhile we have
\begin{align}\label{out_II}
-\int_{B_{2\a\sqrt{\e}}(p)\setminus B_{\a\sqrt{\e}}(p)}\psi\eta\beta(r,\th)dv_g=O\le(\a^4\e^2\ri).
\end{align}
Inserting (\ref{out_I}) and (\ref{out_II}) into (\ref{psi-phi-out-0}), we have
\begin{align}\label{psi-phi-out}
\int_{\S\setminus B_{\a\sqrt{\e}}(p)}\psi\phi_\e dv_g
=&-\psi(p)\le(-2\pi\a^2\e\log\le(\a^2\e\ri)+2\pi\a^2\e+\pi A_p \a^2\e\ri)\nonumber\\
  &+(C_\e+\log\e)\int_{\S\setminus B_{\a\sqrt{\e}}(p)}\psi dv_g+O\le(\a^4\e^2\log(\a^2\e\ri).
\end{align}
Combining (\ref{psi-phi-in}) and (\ref{psi-phi-out}), we have
\begin{align}\label{psi_phi}
\int_\S \psi \phi_\e dv_g
=&\int_{B_{\a\sqrt{\e}}(p)}\psi\phi_\e dv_g+\int_{\S\setminus B_{\a\sqrt{\e}}(p)}\psi\phi_\e dv_g\nonumber\\
=&\int_\S \psi dv_g\le(\log\e-2\log\f{\a^2+1}{\a^2}-A(p)-2\pi\frac{\psi(p)}{\int_\S \psi dv_g}\e\log(\a^2+1)\ri)\nonumber\\
  & + O\le(\a^4\e^2\log(\a^2\e\ri).
\end{align}
Then by (\ref{phi_energy}), (\ref{phi_h-int}) and (\ref{psi_phi}) one has
\begin{align*}
J^{\psi,h}(\phi_\e)
=&\f{1}{2}\int_\S |\na_g \phi_\e|^2 dv_g+8\pi\f{1}{\int_\S \psi dv_g}\int_\S \psi\phi_\e dv_g-8\pi\log\int_\S he^{\phi_\e} dv_g\nonumber\\
=&-8\pi-8\pi\log\pi-4\pi A_p-8\pi\log h(p)\nonumber\\
  &-16\pi^2\le(\f{\psi(p)}{\int_\S \psi dv_g}-\f{1}{4\pi}K_g(p)\ri)\e\log\le(\a^2+1\ri)+16\pi^2\le(1-\f{1}{4\pi}K_g(p)\ri)\e\log\le(\a^2\e\ri)\nonumber\\
  &-2\pi\f{\a^2+1}{\a^2}\le(b_1^2+b_2^2\ri)\e\log\le(\a^2+1\ri)+2\pi\f{\a^2}{\a^2+1}\le(b_1^2+b_2^2\ri)\e\log\le(\a^2\e\ri)\nonumber\\
&-2\pi\f{\a^2+1}{\a^2}\f{\la_g h(p)}{h(p)}\e\log\le(\a^2+1\ri)+2\pi\f{\a^2}{\a^2+1}\f{\la_g h(p)}{h(p)}\e\log\le(\a^2\e\ri)\nonumber\\
&-4\pi\f{\a^2+1}{\a^2}\f{k_1b_1+k_2b_2}{h(p)}\e\log\le(\a^2+1\ri)+4\pi\f{\a^2}{\a^2+1}\f{k_1b_1+k_2b_2}{h(p)}\e\log\le(\a^2\e\ri)\nonumber\\
&+O\le(\f{\e\log(\a^2+1)}{\a^2}\ri)+O\le(\f{-\e\log(\a^2\e)}{\a^2}\ri)+O\le(\le(\e\log(\a^2+1)\ri)^2\ri)\nonumber\\
&+O\le(\le(-\e\log(\a^2\e)\ri)^2\ri)+O\le(\f{1}{\a^4}\ri)+O\le(\a^4\e^2\ri)+O(\e),
\end{align*}
where we have used Proposition 3.2 in \cite{DJLW97}, i.e., $c_1+c_3+\f{2}{3}K_g(p)=4\pi$.
By choosing $\a=\le(\e\log(-\log\e)\ri)^{-1/4}$, we have
\begin{align}\label{bound-from-above}
J^{\psi,h}(\phi_\e)=&-8\pi-8\pi\log\pi-4\pi A_p-8\pi\log h(p)\nonumber\\
                 &-16\pi^2\le(\f{1}{2}\le(\f{\psi(p)}{\int_\S \psi dv_g}+1\ri)-\f{1}{4\pi}K_g(p)+\f{b_1^2+b_2^2}{8\pi}+\f{\la_g h(p)}{8\pi h(p)}+\f{k_1b_1+k_2b_2}{4\pi h(p)}\ri)\e(-\log\e)\nonumber\\
                 &+o(\e(-\log\e)).
\end{align}
So if (\ref{condition}) is satisfied, by (\ref{bound-from-above}) we have
\begin{align*}
J^{\psi,h}(\phi_\epsilon)<-8\pi-8\pi\log\pi-4\pi\max_{y\in \Sigma}\left(2\log h(y)+A_y\right)
\end{align*}
for sufficiently small $\epsilon>0$.
This ends the proof of Theorem \ref{existence} when $h>0$.   $\hfill{\square}$

\section{Proof of Theorem \ref{existence}---Part II: $h\geq0$, $h\nequiv0$ }

In this section, we shall deal with the situation that $h\geq0$ and $h\nequiv0$ on $\S$ and end the proof of Theorem \ref{existence}. The idea comes from our paper \cite{YZ2016}.\\

First, we need the following concentration lemma, which can be seen as a generalization of one on $S^2$ proved by
Chang-Yang \cite{CY88} and one on a general compact Riemannian surface proved by Ding-Jost-Li-Wang \cite{DJLW98}.
\begin{prop}\label{concentration-lemma}
Let $(\S, g)$ be a compact Riemannian surface. Let $\psi$ be a smooth function on $\Sigma$ satisfying $\int_\Sigma \psi dv_g\neq0$.
Given a sequence of $u_j\in W^{1,2}(\S, g)$ with $\int_\S e^{u_j}dv_g=1$ and
$$\f{1}{2}\int_\S |\na_g u_j|^2 dv_g+8\pi\f{1}{\int_\S \psi dv_g}\int_\S \psi u_j dv_g\leq C.$$
Then either

$(i)$ there exists a constant $C_0>0$ such that $\int_\S |\na_g u_j|^2 dv_g\leq C_0$ or

$(ii)$ there exists a subsequence which is also denoted by $u_j$ concentrates at a point $p\in \S$, i.e., for any $r>0$,
$$\lim_{j\ra\infty}\int_{B_r(p)}e^{u_j}dv_g=1.$$
\end{prop}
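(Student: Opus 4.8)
The plan is to follow the concentration-compactness strategy of P.-L. Lions, adapted to the $\psi$-weighted normalization used throughout the paper. First I would dispose of the trivial alternative: if $\|\na_g u_j\|_2$ stays bounded along the sequence, then after passing to a subsequence $\int_\S|\na_g u_j|^2dv_g\leq C_0$ and we are in case $(i)$. So assume $\|\na_g u_j\|_2\ra+\infty$ and aim for $(ii)$. Writing $v_j=u_j-\wan{u}_j$ (so $\wan{v}_j=0$ and $\na_g v_j=\na_g u_j$), the constraint $\int_\S e^{u_j}dv_g=1$ gives $\wan{u}_j=-\log\int_\S e^{v_j}dv_g$, and the energy hypothesis becomes $\tfrac12\|\na_g u_j\|_2^2-8\pi\log\int_\S e^{v_j}dv_g\leq C$. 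Hence $\wan{u}_j\ra-\infty$, $\int_\S e^{v_j}dv_g\ra+\infty$, and, crucially, $\log\int_\S e^{v_j}dv_g\geq\tfrac{1}{16\pi}\|\na_g u_j\|_2^2-C$.

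Next I would normalize the gradient by setting $w_j=v_j/\|\na_g u_j\|_2$, so that $\|\na_g w_j\|_2=1$ and $\wan{w}_j=0$. By the Poincar\'e inequality (Lemma \ref{lemma-poincare-ineq}) the sequence is bounded in $W^{1,2}(\S,g)$, so up to a subsequence $w_j\weakto w_0$ weakly in $W^{1,2}$, strongly in $L^2$ and a.e., while the probability measures $\mu_j=|\na_g w_j|^2dv_g$ converge weakly-$*$ to a probability measure $\mu$ on the compact surface $\S$. The heart of the argument is to show $\mu$ is a single Dirac mass $\d_p$, which I would prove by contradiction: if $\mu(\{x\})<1$ for every $x\in\S$, then around each point there is a small ball carrying asymptotically less than the full unit of gradient energy, so a localized subcritical Trudinger-Moser estimate applied to $w_j-\overline{w_j}$ on that ball, combined with a finite covering of $\S$, yields $\sup_j\int_\S e^{\alpha w_j^2}dv_g<\infty$ for some $\alpha>4\pi$. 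Then the splitting $u_j=\wan{u}_j+\|\na_g u_j\|_2 w_j$ together with Young's inequality $\|\na_g u_j\|_2 w_j\leq\tfrac{\ve}{2}\|\na_g u_j\|_2^2+\tfrac{1}{2\ve}w_j^2$ with $\tfrac{1}{2\ve}=\alpha$ (so $\ve<\tfrac{1}{8\pi}$) and $\int_\S e^{u_j}dv_g=1$ force $\wan{u}_j\geq-\tfrac{\ve}{2}\|\na_g u_j\|_2^2-C$; comparing with $\wan{u}_j\leq\tfrac{C}{8\pi}-\tfrac{1}{16\pi}\|\na_g u_j\|_2^2$ gives $(\tfrac{1}{16\pi}-\tfrac{\ve}{2})\|\na_g u_j\|_2^2\leq C$, contradicting $\|\na_g u_j\|_2\ra\infty$. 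Hence $\mu=\d_p$ for some $p\in\S$; since $\na_g w_0=0$ off $p$ and $\wan{w}_0=0$, one gets $w_0\equiv0$ and $w_j\ra0$ in $L^2(\S,g)$.

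Finally I would upgrade concentration of the gradient at $p$ to concentration of the measure $e^{u_j}dv_g$ at $p$. Fix $r>0$ and note
\begin{align*}
\int_{\S\setminus B_r(p)}e^{u_j}dv_g=\f{\int_{\S\setminus B_r(p)}e^{v_j}dv_g}{\int_\S e^{v_j}dv_g}.
\end{align*}
On $\Omega=\S\setminus B_{r/2}(p)$ the energy fraction $s_j^2=\int_\Omega|\na_g w_j|^2dv_g\ra0$ because $\mu(\overline{\Omega})=0$, and $\overline{w_j}^{\,\Omega}\ra0$ since $w_j\ra0$ in $L^2$. Writing $w_j=\overline{w_j}^{\,\Omega}+\psi_j$ with $\psi_j$ of zero ordinary mean on $\Omega$, applying the subcritical localized Trudinger-Moser inequality to $\psi_j/s_j$, and splitting $\|\na_g u_j\|_2\psi_j\leq\tfrac{\d}{2}\psi_j^2/s_j^2+\tfrac{1}{2\d}\|\na_g u_j\|_2^2 s_j^2$ with $\tfrac{\d}{2}<4\pi$, one obtains $\log\int_{\S\setminus B_r(p)}e^{v_j}dv_g\leq o(\|\na_g u_j\|_2^2)$, while from the first paragraph $\log\int_\S e^{v_j}dv_g\geq\tfrac{1}{16\pi}\|\na_g u_j\|_2^2-C$. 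Therefore the displayed ratio tends to $0$, that is $\int_{B_r(p)}e^{u_j}dv_g\ra1$ for every $r>0$, which is exactly $(ii)$.

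I expect the main obstacle to be the dichotomy above, namely proving that the absence of a full-mass atom forces the uniform bound $\int_\S e^{\alpha w_j^2}dv_g\leq C$ with $\alpha>4\pi$; this requires a careful localized Trudinger-Moser estimate on the curved surface with the mean-value corrections and cutoff errors controlled, together with correct bookkeeping of the lower-order terms coming from $\overline{w_j}$ and from passing between the $\psi$-average $\wan{\,\cdot\,}$ and the ordinary average.
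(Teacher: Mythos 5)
Your proposal is correct in outline, but it takes a genuinely different route from the paper. The paper never looks at the gradient measure $|\na_g u_j|^2\,dv_g$: it argues by contradiction from the failure of $(ii)$, covers $\S$ by finitely many balls on each of which the $e^{u_j}$-mass stays below some $\d_0<1$, and then invokes an Aubin-type ``distribution of mass'' lemma (Lemma \ref{distribution-of-mass}, itself deduced from Theorem \ref{thm-m-t}), which improves the Trudinger--Moser constant from $\frac{1}{16\pi}$ to essentially $\frac{1}{32\pi}$ whenever the mass of $e^{u}$ splits between two separated sets carrying a fixed fraction $\a_0$ each. Fed into the hypothesis $\frac12\|\na_g u_j\|_2^2+8\pi\wan{u}_j\leq C$ together with $\int_\S e^{u_j}dv_g=1$, this forces $\|\na_g u_j\|_2\leq C$; a dyadic subdivision of a coordinate square then squeezes the full $e^{u_j}$-mass onto a single point, contradicting the non-concentration assumption. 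You instead run Lions-type concentration--compactness on the normalized gradients $w_j=v_j/\|\na_g u_j\|_2$, splitting the Dirichlet energy rather than the $e^{u}$-mass. Both reach the same dichotomy. The paper's route uses only tools already established in the paper (Theorem \ref{thm-m-t} and the elementary Lemma \ref{distribution-of-mass}); yours imports the sharp localized Moser--Trudinger constant on small balls, but in exchange yields extra information the paper does not state, namely $w_j\ra 0$ in $L^2(\S,g)$ and the quantitative decay $\log\int_{\S\setminus B_r(p)}e^{v_j}dv_g=o\bigl(\|\na_g u_j\|_2^2\bigr)$.

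One step should not be left as written: the claim that the absence of a unit atom of $\mu$ gives $\sup_j\int_\S e^{\a w_j^2}dv_g<\infty$ for some $\a>4\pi$ does not follow by applying the localized Trudinger--Moser inequality to $w_j-\overline{w_j}$ on each ball of a finite cover exactly as you describe, because the cutoff produces an $L^2$ remainder $C_\d\|w_j-c_{i,j}\|_{L^2(B_i)}^2$ in the estimate of $\|\na_g(\phi_i(w_j-c_{i,j}))\|_2^2$ which does not become small when the weak limit $w_0$ is nonzero. You need the standard case split: if $w_0=0$ (so $w_j\ra0$ in $L^2$ and the remainders vanish) the covering argument closes; if $\na_g w_0\not\equiv0$ one uses instead Lions' global improvement $\a<4\pi/(1-\|\na_g w_0\|_2^2)$. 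Since $\wan{w}_0=0$ rules out nonzero constant limits, these two cases are exhaustive. You flag this as the main obstacle, and it is; with that case distinction, and with the Poincar\'e inequality of Lemma \ref{lemma-poincare-ineq} used to justify boundedness of $w_j$ in $W^{1,2}(\S,g)$, your argument goes through.
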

To prove Proposition \ref{concentration-lemma}, one needs the following "distribution of mass" lemma, which can be seen as a generalization of
one proved by Aubin \cite{Au82} (see also \cite{CL91}).
\begin{lemma}\label{distribution-of-mass}
Let $(\S, g)$ be a compact Riemannian surface. Let $\psi$ be a smooth function on $\Sigma$ satisfying $\int_\Sigma \psi dv_g\neq0$.
Let $\O_1$ and $\O_2$ be two subsets of $\S$ satisfying $dist(\O_1, \O_2)\geq\e_0>0$ and $\a_0\in (0, 1/2)$. For any $\e\in(0,1)$,
there exists a constant $C=C(\e,\e_0,\a_0)$ such that
\begin{align*}
\int_\S e^u dv_g\leq C\exp\le\{\f{1}{32\pi(1-\e)}||\na_g u||_2^2+\wan{u}\ri\}
\end{align*}
holds for any $u\in W^{1,2}(\S, g)$ satisfying
\begin{align}\label{distribution}
\f{\int_{\O_1}e^udv_g}{\int_\S e^u dv_g}\geq\a_0~~~~\text{and}~~~~\f{\int_{\O_2}e^udv_g}{\int_\S e^u dv_g}\geq\a_0.
\end{align}
\end{lemma}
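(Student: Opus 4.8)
The plan is to prove the "distribution of mass" Lemma~\ref{distribution-of-mass} by splitting the surface into two pieces adapted to the sets $\O_1,\O_2$ and applying the standard Trudinger-Moser inequality (Lemma~\ref{lemma-m-t-ineq}) on each piece with a constant that is twice as large, which is exactly where the factor $32\pi(1-\e)$ instead of $16\pi$ comes from. First I would choose a smooth cutoff function $\chi$ with $\chi\equiv1$ on a neighborhood of $\O_1$ and $\chi\equiv0$ on a neighborhood of $\O_2$, which is possible because $\mathrm{dist}(\O_1,\O_2)\geq\e_0>0$; this lets me write $u=\chi u+(1-\chi)u$ and decompose $\int_\S e^u\,dv_g=\int_\S e^{\chi u}e^{(1-\chi)u}\,dv_g$, estimating the two factors by localized Trudinger-Moser bounds. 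The key point is that the mass hypothesis (\ref{distribution}) forces the average of $u$ on $\S$ (weighted by $e^u$) to be controlled by the averages on $\O_1$ and $\O_2$ separately, so that neither region can absorb all the concentration, and hence each localized gradient energy is effectively cut roughly in half.

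Concretely, the second step is to split the Dirichlet energy. I would pick an intermediate open set $\O$ with $\O_1\subset\subset\O$ and $\mathrm{dist}(\O,\O_2)>0$, write $\int_\S|\na_g u|^2=\int_\O|\na_g u|^2+\int_{\S\setminus\O}|\na_g u|^2$, and observe that the mass condition on both $\O_1$ and $\O_2$ means that up to an additive constant controlled by $\log\a_0^{-1}$, the contribution of $e^u$ is split so that the exponential on each region is bounded using only the gradient energy localized to that region. The arithmetic is that if $E_1=\int_\O|\na_g u|^2$ and $E_2=\int_{\S\setminus\O}|\na_g u|^2$ with $E_1+E_2=\|\na_g u\|_2^2$, then applying the Trudinger-Moser inequality to the two halves and recombining gives an exponent of the form $\f{1}{16\pi}\max(E_1,E_2)$ plus lower-order terms; since the mass split prevents a single region from carrying all the measure, one gains the extra factor $2$ relative to $16\pi$, yielding $\f{1}{32\pi}\|\na_g u\|_2^2$ and then, by a standard $\e$-absorption argument (raising the Trudinger-Moser exponent slightly), the claimed $\f{1}{32\pi(1-\e)}$.

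The third step is to replace the ordinary average $\bar u$ appearing in Lemma~\ref{lemma-m-t-ineq} by the weighted average $\wan{u}=\f{1}{\int_\S\psi\,dv_g}\int_\S\psi u\,dv_g$. This is precisely the content of the Poincar\'e-type and Sobolev-Poincar\'e-type inequalities (Lemmas~\ref{lemma-poincare-ineq} and~\ref{lemma-s-p}): since $|\bar u-\wan u|$ can be bounded by $\|u-\wan u\|_1$ and hence by $C_\S\|\na_g u\|_2$, any appearance of $\bar u$ may be traded for $\wan u$ at the cost of a term $\d\|\na_g u\|_2^2+C_\d$, which is again absorbed into the $(1-\e)$ correction. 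I would apply the classical Trudinger-Moser inequality in the form involving $\bar u$, then convert to $\wan u$ via these inequalities, keeping all extra gradient terms multiplied by a small $\d$ so that they may be buried in the $\e$-room of the exponent.

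The main obstacle I anticipate is bookkeeping rather than conceptual: controlling the cross terms produced by the cutoff $\chi$, namely the terms involving $|\na_g\chi|\,|u|$ and the error $\int_\S e^{\chi u}e^{(1-\chi)u}$ versus a clean product of two separate integrals, while simultaneously guaranteeing that the improvement factor of $2$ survives the recombination. The delicate quantitative issue is that the gain in the constant from $16\pi$ to $32\pi$ depends on the mass being genuinely bounded below by $\a_0$ on \emph{both} $\O_1$ and $\O_2$, so I must track the dependence of the final constant $C$ on $\e$, $\e_0$, and $\a_0$ explicitly, verifying that it stays finite as long as $\a_0\in(0,1/2)$ is fixed and $\e\in(0,1)$; the borderline case $\a_0\to1/2$ is exactly where one region would carry all the mass and the factor-$2$ improvement would degenerate, so I would confirm the argument only needs $\a_0$ bounded away from, but not necessarily far below, this threshold.
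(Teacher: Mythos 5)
There is a genuine gap, in two places. First, your central decomposition $\int_\Sigma e^u\,dv_g=\int_\Sigma e^{\chi u}e^{(1-\chi)u}\,dv_g$ followed by ``estimating the two factors'' is not a valid step: the right-hand side is a single integral of a product, not a product of integrals, and any attempt to separate it (e.g.\ by H\"older) doubles the exponents and destroys the constant. The paper's mechanism is different and simpler: take two cutoffs $\phi_1,\phi_2$ with disjoint supports and $\phi_i\equiv1$ on $\Omega_i$; the mass hypothesis gives $\int_\Sigma e^u\,dv_g\leq\alpha_0^{-1}\int_{\Omega_i}e^u\,dv_g\leq\alpha_0^{-1}\int_\Sigma e^{\phi_i u}\,dv_g$ for \emph{both} $i$, so one applies Theorem~\ref{thm-m-t} to only \emph{one} localized function, namely the $\phi_i u$ with the smaller Dirichlet energy, and the factor $2$ comes from $\min_i\|\nabla_g(\phi_i u)\|_2^2\leq\frac12\|\nabla_g[(\phi_1+\phi_2)u]\|_2^2$ (disjoint supports). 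Your ``exponent of the form $\frac{1}{16\pi}\max(E_1,E_2)$'' has the wrong extremum; it is the minimum that matters, and that is exactly why the hypothesis is needed on both sets.

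Second, and more seriously, the lower-order term cannot be absorbed the way you propose. Expanding $\|\nabla_g(\phi_i u)\|_2^2$ produces, after Cauchy's inequality, a term $C(\epsilon_1)\|u\|_2^2$ that is \emph{quadratic} in $u$. With $\wan u=0$, the Poincar\'e inequality gives only $\|u\|_2^2\leq C\|\nabla_g u\|_2^2$ with a fixed constant $C$ that cannot be made small, so this term is of the same order as the main term and cannot be ``buried in the $\epsilon$-room'' (Young's inequality helps for terms linear in $\|\nabla_g u\|_2$, such as $|\bar u-\wan u|$, but not for quadratic ones). The paper removes it by a truncation: choose $a_\eta$ with $\mathrm{Vol}_g\{u\geq a_\eta\}=\eta$, apply the localized estimate to $(u-a_\eta)_+$, and use H\"older together with the Sobolev--Poincar\'e inequality (Lemma~\ref{lemma-s-p}, with $p=4$) to get $\|(u-a_\eta)_+\|_2^2\leq C\eta^{1/2}\|\nabla_g u\|_2^2$ and $a_\eta\leq\eta\|\nabla_g u\|_2^2+C/\eta$, after which $\epsilon_1$ and $\eta$ are chosen small. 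Without this step the stated constant $\frac{1}{32\pi(1-\epsilon)}$ is not reachable by your argument.
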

\begin{proof}
Let $\phi_1$, $\phi_2$ be two smooth functions on $\S$ such that
$$0\leq\phi_i\leq1,~~\phi_i\equiv1,~~\text{for}~x\in\O_i,~~i=1,2$$
and $\text{supp}~\phi_1 \cap \text{supp}~\phi_2=\emptyset$. It suffices to show that for $u\in W^{1,2}(\S, g)$, $\wan{u}=0$,
(\ref{distribution}) implies
\begin{align}\label{improved-m-t}
\int_\S e^u dv_g\leq C \exp\le\{\f{1}{32\pi(1-\e)}||\na_g u||^2_2\ri\}.
\end{align}

We can assume without loss of generality that $||\na_g(\phi_1 u)||_2\leq ||\na_g (\phi_2 u)||_2$. Then by (\ref{distribution}) and
 Theorem \ref{thm-m-t},  one has
\begin{align}\label{distribution1}
\int_\S e^u dv_g
\leq&  \f{1}{\a_0}\int_{\O_1} e^u dv_g\leq \f{1}{\a_0}\int_\S e^{\phi_1 u} dv_g\nonumber\\
\leq& \f{C}{\a_0} \exp\le\{\f{1}{16\pi}||\na_g (\phi_1 u)||^2_2+\wan{\phi_1 u}\ri\}\nonumber\\
\leq& \f{C}{\a_0} \exp\le\{\f{1}{32\pi}||\na_g \le[(\phi_1+\phi_2)u\ri]||^2_2+\wan{\phi_1 u}\ri\}\nonumber\\
\leq& \f{C(\e_0)}{\a_0} \exp\le\{\f{1}{32\pi}(1+\e_1)||\na_g u||^2_2+C(\e_1)||u||_2^2\ri\}
\end{align}
for some small $\e_1>0$, where in the last inequality we have used Lemma \ref{lemma-poincare-ineq} and the Cauchy's inequality.

Using the condition $\wan{u}=0$ one can get rid of the term $||u||_2^2$ on the right hand side of (\ref{distribution1}).

Given small enough $\eta>0$, there exists $a_{\eta}$ such that $\text{Vol}_g\{x\in \S:~u(x)\geq a_{\eta}\}=\eta$. Applying (\ref{distribution1}) to the function
$(u-a_{\eta})_+= \max\{0, (u-a_{\eta})\}$, we have
\begin{align}\label{distribution2}
\int_\S e^u dv_g\leq& e^{a_{\eta}}\int_\S e^{(u-a_{\eta})} dv_g \leq e^{a_{\eta}} \int_\S e^{(u-a_{\eta})_+} dv_g\nonumber\\
\leq& C \exp\le\{\f{1}{32\pi}(1+\e_1)||\na_g u||_2^2 +C(\e_1)||(u-a_{\eta})_+||_2^2 + a_{\eta}\ri\},
\end{align}
where $C=C(\e_0, \a_0)$.

By the H\"{o}lder inequality and Lemma \ref{lemma-s-p}, we have
\begin{align}\label{distribution3}
\int_\S \le |(u-a_{\eta})_+\ri|^2 dv_g=&\int_{\{x\in\S:~u(x)\geq a_{\eta}\}} \le |(u-a_{\eta})_+\ri|^2 dv_g\nonumber\\
\leq& \le(\int_{\{x\in\S:~u(x)\geq a_{\eta}\}} \le |(u-a_{\eta})_+\ri |^4\ri)^{1/2}\cdot\eta^{1/2}\nonumber\\
\leq& \le(\int_\S |u|^4 dv_g\ri)^{1/2}\cdot\eta^{1/2}\nonumber\\
\leq& C \int_\S |\na_g u|^2 dv_g \cdot \eta^{1/2}.
\end{align}
By the H\"{o}der inequality and Lemma \ref{lemma-poincare-ineq}, we have
\begin{align*}
a_{\eta}\cdot\eta\leq\int_{\{x\in\S:~u(x)\geq a_{\eta}\}} u dv_g\leq \int_\S |u| dv_g\leq C\le(\int_\S |\na_g u|^2 dv_g\ri)^{1/2}.
\end{align*}
So
\begin{align}\label{distribution5}
a_{\eta}\leq \eta \int_\S |\na_g u|^2 dv_g+\f{C}{\eta}.
\end{align}
Substituting (\ref{distribution3}) and (\ref{distribution5}) into (\ref{distribution2}) and choosing $\e_1$ and $\eta$ sufficiently small such that
\begin{align*}
\f{1}{32\pi}(1+\e_1)+C(\e_1)C\eta^{1/2}+\eta\leq\f{1}{32\pi(1-\e)},
\end{align*}
then we obtain the inequaliaty (\ref{improved-m-t}). This ends the proof of Lemma \ref{distribution-of-mass}.
\end{proof}

\underline{\textit{Proof of Proposition \ref{concentration-lemma}.}} If $(ii)$ does not hold, i.e., every subsequence of $u_j$ does not concentrate. Then
for any $p\in \S$ there exists $r\in(0,i_\S/16)$ such that
\begin{align}\label{prop1}
\lim_{j\ra\infty} \int_{B_r(p)} e^{u_j} dv_g < \d_0 <1,
\end{align}
where $i_\S$ is the injective radius of $(\S, g)$. (Note that we do not distinguish sequence and its subsequences.)

Since $(\S, g)$ is compact, there exists a finite set $\{(p_l, r_l):~l=1,2,...,L\}$ satisfying
\begin{align*}
\lim_{j\ra\infty}\int_{B_{r_l}(p_l)} e^{u_j} dv_g<\d<1
\end{align*}
and $\bigcup_{l=1}^{L} B_{r_l}(p_l)=\S$.

Without loss of generality, one may assume that
\begin{align*}
\lim_{j\ra\infty} \int_{B_{r_1}(p_1)} e^{u_j} dv_g \geq \a_0>0
\end{align*}
where $\a_0\in(0,\d_0)$ is a constant.

We prove $(i)$ must happen by contradiction. Suppose $(i)$ not happens, i.e., $\int_\S |\na_g u_j|^2 dv_g$ is unbounded.
Then from Lemma \ref{distribution-of-mass} we know
\begin{align}\label{distribution7}
\lim_{j\ra\infty} \int_{\S\setminus B_{2r_1}(p_1)} e^{u_j} dv_g=0.
\end{align}
Choosing a normal coordinate system $(x_1, x_2)$ around $p_1$ and assuming in $B_{16r_1}(p_1)$
\begin{align*}
\f{1}{2}|x-y| \leq dist_g(x,y) \leq 2|x-y|,
\end{align*}
where $|x-y|=dist_{\mathbb{R}^2}(x,y)$.

We consider the square $P_1=\{ |x_i|\leq 4r_1:~i=1,2\}\subset \mathbb{R}^2$, from (\ref{distribution7}) one knows
\begin{align*}
\lim_{j\ra\infty}\int_{\exp_{p_1}(P_1)} e^{u_j} dv_g = 1.
\end{align*}
Dividing $P_1$ into $16$ equal sub-squares. Since $\int_\S |\na_g u_j|^2 dv_g$ is unbounded, by Lemma \ref{distribution-of-mass}
one gets a square $P_2$ which is a union of at most $9$ of the equal sub-squares of $P_1$ such that
\begin{align*}
\lim_{j\ra\infty}\int_{\exp_{p_1}(P_2)} e^{u_j} dv_g = 1.
\end{align*}
Continuing this procedure, we can obtain a sequence of square $P_n$. It is easy to check that $P_n\ra p_0$ as $n\ra\infty$ for some $p_0\in\S$
and
\begin{align*}
\lim_{j\ra\infty}\int_{B_r(p_0)} e^{u_j} dv_g = 1
\end{align*}
for any $r\in(0, i_\S)$. This contradicts (\ref{prop1}). The contradiction tells us that $\int_\S |\na_g u_j|^2 dv_g$ is bounded,
i.e., $(i)$ holds. This ends the proof of Proposition \ref{concentration-lemma}.   $\hfill{\square}$\\

Now, we are ready to prove Theorem \ref{existence} when $h\geq0$, $h\nequiv0$. \\

\underline{\textit{ Proof of Theorem \ref{existence}---Part II: $h\geq0$, $h\nequiv0$.}}
Checking the proof of Theorem \ref{existence}---Part I: $h>0$ carefully, one will finds that
the condition $h>0$ is just used in solving the bubble (\ref{bubble-function}). Therefore, if $h\geq0$, $h\nequiv0$,
we just need to prove that the blowup (if happens) will not happen on zero point of $h$.

In the following we assume $u_\e$ blows up, i.e., $||\na u_\e||_2\ra+\infty$ as $\e\ra0$.

Recalling that in (\ref{E-L_equation}), we have $u_\epsilon\in C^{\infty}(\S)\cap \wan{X}$.
Lemma \ref{lemma-c} still holds, i.e., $c_\e\ra\infty$ as $\e\ra0$.
Let $\O\subset \S$ be a domain. If
$\lam_\e^{-1}\int_\O he^{u_\e}dv_g\leqslant\frac{1}{2}-\d$
for some $\d\in(0, \f{1}{2})$, then (\ref{out-infty-norm}) implies that
\be\label{Fact3}||u_\e||_{L^{\infty}(\O_0)}\leq C(\O_0,\O),\quad\forall \O_0\subset\subset\O.\ee

Suppose $x_\e\ra p$ as $\e\ra0$. As we explained before, to prove Theorem \ref{existence}, it
 suffices to prove that $h(p)>0$. For this purpose, we set
$v_\e=u_\e-\log\int_\S e^{u_\e}dv_g$. Then we have
\begin{align}\label{h=0_1}
\int_\S e^{v_\e}dv_g=1,~~J^{\psi,h}_\e(v_\e)=J^{\psi,h}_\e(u_\e)
\end{align}
 and
\begin{align}\label{h=0_2}
J^{\psi,h}_\e(u_\e)=\inf_{u\in \wan{X}}J^{\psi,h}_\e(u)\leq J^{\psi,h}_\e(0)=-8\pi(1-\e)\log\int_\S h dv_g.
\end{align}
The H\"{o}lder inequality together with Lemmas \ref{lemma-poincare-ineq} and \ref{lemma-l_q-norm} tells us that
\begin{align}\label{mean-u}
|\bar{u}_\e|=|\wan{u}_\e-\bar{u}_\e|
=\le|\f{1}{\int_\S \psi dv_g}\int_\S \psi\le(u_\e-\bar{u}_\e\ri) dv_g\ri|
\leq C
\end{align}
By Jensen's inequality and (\ref{mean-u}), we have
\begin{align}\label{mean-of-v}
\wan{v}_\e=-\log\int_\S e^{u_\e} dv_g&=-\log\mbox{Vol}_g(\S)-\log\left(\f{1}{\mbox{Vol}_g(\S)}\int_\S e^{u_\e} dv_g\right)\nonumber\\
&\leq-\log\mbox{Vol}_g(\S)+C\nonumber\\
&\leq C.
\end{align}
Combining (\ref{h=0_1}), (\ref{h=0_2}) and (\ref{mean-of-v}) one obtains
\begin{align}
\f{1}{2}\int_\S |\na_g v_\e|^2dv_g+8\pi\wan{v}_\e
\leq& J^{\psi,h}_\e(v_\e)+8\pi\e\wan{v}_\e+8\pi(1-\e)\log\int_\S he^{v_\e}dv_g \nonumber\\
\leq& J^{\psi,h}_\e(u_\e)+8\pi\e\wan{v}_\e+8\pi(1-\e)\log\max_\S h \nonumber\\
\leq& C + 8\pi(1-\e)\log\frac{\max_\S {h}}{\int_\S h dv_g}\nonumber\\
\leq&C.\nonumber
\end{align}
Clearly, $(ii)$ of Lemma \ref{concentration-lemma} holds in this case. Hence there exists some $p^\prime\in \S$ such that
$v_\e$ concentrates at $p'$, namely,
\begin{align}\label{conc}
\lim_{\e\ra0}\int_{B_r(p')}e^{v_\e}dv_g=1,\quad\forall r>0.
\end{align}
We first claim that
\be\label{claim}
h(p')>0.\ee
To see this, in view of (\ref{conc}), we calculate
\begin{align}
\label{cal}
\frac{ \int_\S he^{u_\e}dv_g}{\int_\S e^{u_\e}dv_g}=\int_\S he^{v_\e}dv_g&=\int_{B_r(p')}he^{v_\e}dv_g+
\int_{\S\setminus B_r(p')}he^{v_\e}dv_g\nonumber\\
                          &=(h(p')+o_r(1))\int_{B_r(p')}e^{v_\e}dv_g+o_\e(1)\nonumber\\
                          &=(h(p')+o_r(1))(1+o_\e(1))+o_\e(1)\nonumber\\
                          &=h(p')+o_r(1)+o_\e(1).
\end{align}
Because the left hand side of (\ref{cal}) does not depend on $r$,  we have by passing to the limit
$r\ra 0$,
\begin{align}
\label{pp}
\frac{ \int_\S he^{u_\e}dv_g}{\int_\S e^{u_\e}dv_g}=h(p')+o_\e(1).
\end{align}
So by (\ref{pp}) and Theorem \ref{thm-m-t} we have
\begin{align}
\label{18}
J^{\psi,h}_\e(u_\e)=&\frac{1}{2}\int_\S |\na_g u_\e|^2dv_g-8\pi(1-\e)\log\int_\S he^{u_\e}dv_g\nonumber\\
                =&\frac{1}{2}\int_\S |\na_g u_\e|^2dv_g-8\pi(1-\e)\log\le(h(p')+o_\e(1)\ri)\nonumber\\
                    &-8\pi(1-\e)\log\int_\S e^{u_\e}dv_g\nonumber\\
                 \geq&\frac{1}{2}\int_\S |\na_g u_\e|^2dv_g-8\pi(1-\e)\log\le(h(p')+o_\e(1)\ri)\nonumber\\
                    &-8\pi(1-\e)\log\le(C \exp\le\{\frac{1}{16\pi}\int_\S |\na_g u_\e|^2dv_g\ri\}\ri)\nonumber\\
                 \geq&-8\pi(1-\e)\log\le(C(h(p')+o_\e(1))\ri).
\end{align}
Combining (\ref{h=0_2}) and (\ref{18}), we obtain
\bna
-8\pi(1-\e)\log\int_\S h dv_g\geq-8\pi(1-\e)\log\le(C(h(p')+o_\e(1))\ri),
\ena
and whence
$$\log\int_\S h dv_g\leq\log\le(Ch(p')\ri).$$
This immediately leads to (\ref{claim}).

Then we claim that
\be\label{point}p^\prime=p.\ee
In view of $(ii)$ of Lemma \ref{concentration-lemma}, there holds $\int_\O e^{v_\e}dv_g\ra 0$,
$\forall \O\subset\subset \S\setminus\{p'\}$. Noting that $u_\epsilon\in \wan{X}$, from (\ref{pp}) we have
\begin{align}
\label{20}
\lam_\e^{-1}\int_\O he^{u_\e}dv_g=\lam_\e^{-1}\int_\S e^{u_\e}dv_g\int_\O he^{v_\e}dv_g\leq \frac{\max_{\S}h}{h(p')+o_\e(1)}
\int_\O e^{v_\e}dv_g\ra 0
\end{align}
as $\epsilon\ra 0$.
Combining (\ref{Fact3}) and (\ref{20}),  we obtain $||u_\e||_{L^{\infty}(\O)}\leq C$
and thus $||v_\e-\wan{v}_\e||_{L^{\infty}(\O)}\leq C$
for any $\O\subset\subset \S\setminus\{p'\}$. This together with (\ref{mean-of-v}) implies that $v_\e(x)\leq C$
for all $x\in \O\subset\subset \S\setminus\{p'\}$.

It follows from (\ref{pp}) and $u_\epsilon\in \wan{X}$ that
$$\lam_\e^{-1}\int_\S e^{u_\e}dv_g=\frac{1}{h(p')+o_\e(1)}<\frac{2}{h(p')}$$
for sufficiently small $\e>0$.
Suppose $p'\neq p$. Recalling that $c_\epsilon=u_\epsilon(x_\epsilon)=\max_{\S}u_\epsilon$ and
 $x_\epsilon\ra p$, we find a domain $\O$ such that $x_\e\in\O \subset\subset \S\setminus\{p'\}$. Hence
\bna
c_\e-\log\lam_\e=u_\e(x_\e)-\log\lam_\e= v_\e(x_\e)+\log\le(\lam_\e^{-1}\int_\S e^{u_\e}dv_g\ri)\leq C,
\ena
which contradicts Lemma \ref{lemma-c} and concludes our claim (\ref{point}). Combining (\ref{claim})
and (\ref{point}), we obtain $h(p)>0$. The remaining part of the proof of Theorem \ref{existence} is
completely analogous to Section 4, we omit the details here.     $\hfill\Box$\\

  {\bf Acknowledgements}. The work is supported by the National Science Foundation of China
  (Grant No. 11401575). The author thanks Professor Yunyan Yang for
  his helpful discussions and suggestions.

\end{document}